\documentclass[english]{amsart}

\usepackage{esint}
\usepackage[svgnames]{xcolor} 
\usepackage[colorlinks,citecolor=red,pagebackref,hypertexnames=false,breaklinks]{hyperref}
\usepackage{pgf,tikz}
\usepackage{pdfsync}

\usepackage[bottom]{footmisc}
\usepackage{dsfont}
\usepackage{url}
\usepackage[utf8]{inputenc}
\usepackage[T1]{fontenc}
\usepackage{lmodern}
\usepackage{babel}
\usepackage{mathtools}  
\usepackage{amssymb}
\usepackage{lipsum}
\usepackage{mathrsfs}
\usepackage{color}
\usepackage{bbm}

\usepackage{epsf,graphicx,epsfig,cite,latexsym}
\usepackage[export]{adjustbox}
\usepackage{mathtools}
\usepackage{latexsym, amsmath, amssymb, a4, epsfig}
\usepackage[T1]{fontenc}

\usepackage{listings}

\usepackage{algorithm}
\usepackage{algpseudocode}

 \usepackage{mathrsfs}
\newcommand{\no}[1]{#1}
\renewcommand{\no}[1]{}
\usepackage{color}

\usepackage{tikz}
\usetikzlibrary{calc,trees,positioning,arrows,chains,shapes.geometric,decorations.pathreplacing,
decorations.pathmorphing,shapes,matrix,shapes.symbols}

\tikzset{
>=stealth',
  punktchain/.style={
    rectangle, 
    rounded corners, 
    draw=black, very thick,
    text width=10em, 
    minimum height=3em, 
    text centered, 
    on chain},
  line/.style={draw, thick, <-},
  element/.style={
    tape,
    top color=white,
    bottom color=blue!50!black!60!,
    minimum width=8em,
    draw=blue!40!black!90, very thick,
    text width=10em, 
    minimum height=3.5em, 
    text centered, 
    on chain},
  every join/.style={->, thick,shorten >=1pt},
  decoration={brace},
  tuborg/.style={decorate},
  tubnode/.style={midway, right=2pt},
}



\newtheorem{lemma}{Lemma}[section]
\newtheorem{remark}{Remark}[section]
\newtheorem{definition}{Definition}[section]

\newtheorem{proposition}{Proposition}[section]
\newtheorem{theorem}{Theorem}[section]


\def\F{\mathcal{F}}

\def\a{{\mathsf a}}
\def\ta{{\tilde{\mathsf a}}}
\def\A{{\mathsf A}}
\def\L{{\mathsf L_{\mathsf a}}}
\def\tL{{\mathsf L_{\tilde{\mathsf a}}}}
\def\F{{\mathsf F}}
\def\la{\langle}
\def\ra{\rangle}

\newcommand{\be}{\begin{equation}}
\newcommand{\ee}{\end{equation}}
\newcommand{\ba}{\begin{array}}
\newcommand{\ea}{\end{array}}
\newcommand{\bea}{\begin{eqnarray*}}
\newcommand{\eea}{\end{eqnarray*}}
\newcommand{\bean}{\begin{eqnarray}}
\newcommand{\eean}{\end{eqnarray}}

\def\div{\operatorname{div}}

 \setlength{\marginparwidth}{0.6in}

\setlength{\oddsidemargin}{0.0in}
\setlength{\evensidemargin}{0.0in}
\setlength{\textwidth}{6.5in}
\setlength{\topmargin}{0.0in}
\setlength{\textheight}{8.5in}

\date{\today}

\numberwithin{algorithm}{section}
\numberwithin{figure}{section}

\title[]{Coefficient identification in parabolic equations  with final data}

\author{Faouzi Triki}

\address{Faouzi Triki,  Laboratoire Jean Kuntzmann,  UMR CNRS 5224, 
Universit\'e  Grenoble-Alpes, 700 Avenue Centrale,
38401 Saint-Martin-d'H\`eres, France}

\email{faouzi.triki@univ-grenoble-alpes.fr}

\thanks{
This work  is supported in part by the
 grant ANR-17-CE40-0029 of the French National Research Agency ANR (project MultiOnde).}

\subjclass{Primary: 35R30, 35K20.}
\keywords{inverse  problem,  uniqueness, stability estimate,  parabolic equation, final data}
\begin{document}

\lstset{language=Matlab,%
    breaklines=true,%
    morekeywords={matlab2tikz},
    keywordstyle=\color{blue},%
    morekeywords=[2]{1}, keywordstyle=[2]{\color{black}},
    identifierstyle=\color{black},%
    stringstyle=\color{mylilas},
    commentstyle=\color{mygreen},%
    showstringspaces=false,
    numbers=left,%
    numberstyle={\tiny \color{black}},
    numbersep=9pt, 
    emph=[1]{for,end,break},emphstyle=[1]\color{red}, 
}

\begin{abstract}

In this work we determine the second-order coefficient
in a parabolic equation from the knowledge of a single final data. Under assumptions on the concentration of eigenvalues 
of the associated elliptic operator, and   the initial state,  we show  the uniqueness  of solution,  and we derive a Lipschitz stability 
estimate for the inversion  when  the final time is large enough.  The Lipschitz stability constant grows exponentially  with respect to the final time, which makes the inversion  ill-posed.  The proof of the stability estimate is based on a spectral decomposition
of the solution to the parabolic equation in terms of the eigenfunctions of the  associated elliptic operator, and an ad hoc method
 to solve a nonlinear stationary transport equation that is itself of interest.
 \end{abstract}

\maketitle

\section{Introduction and main results}

Let $\Omega$ be a $C^3$ bounded domain of $\mathbb{R}^n$, $n=2,3$, with a  boundary $\Gamma$. Let $\nu(x)$ be the outward unitary normal vector at $x\in \Gamma$.  For $ \mathsf a_+ >1$,  a fixed constant, and $\mathsf a_0 \in C^1(\Gamma)$,   a given function,
set

\[
\mathsf A = \left\{ a  \in C^1(\overline \Omega): \; \;   1\leq \mathsf a(x); \;\; \mathsf a|_{\Gamma} = \mathsf a_0 ;\;\;
\| \a\|_{C^1(\overline \Omega)}  \leq \mathsf a_+  \right\}.
\]
Consider, for $u_0\in L^2(\Omega)$ and $\mathsf a \in \mathsf A$,  the following  initial-boundary value problem 

\bean \label{mainequation} \left\{ \ba{lllcc}
u_t-\div(\mathsf a\nabla u) = 0 &\textrm{in} & \Omega \times ]0, +\infty[,\\
u=0  &\textrm{on}&   \Omega \times ]0,+\infty[,\\
u= u_0  &\textrm{in} & \Omega \times\{0\}.
\ea
\right.
\eean
The  parabolic system \eqref{mainequation}  is   used to describe a wide variety of time-dependent phenomena, including heat conduction, particle diffusion, and pricing of derivative investment instruments. It is well  known that the system~\eqref{mainequation} has a unique solution $u(x, t) \in C^0\left([0, +\infty[; L^2(\Omega) \right) \cap C^0\left(]0, +\infty[; H^2(\Omega) \cap H^1_0(\Omega)\right)$ \cite{Ch09}.\\

The goal of this work is to study the following  inverse problem (P):  Given  $u(x, T) \in H^2( \Omega)$ for $T>0$,
 to find $\mathsf a \in \mathsf A$ such that  $u$ is a solution to the system~\eqref{mainequation}.\\

This inverse problem finds applications in multi-wave imaging and geophysics  \cite{AGKNS17, DY80, AV85, Ti35}.  It  can be seen as an extension to a non-stationary setting of a well  known  inverse elliptic problem with interior data, for which uniqueness and stability  have been already  derived \cite{A88, ADFV17, BCT19}. In such an elliptic context, it can be seen that  boundary information on the coefficient $a$ is needed, as well as  a unique continuation property of the gradient of solutions.  Notice that in dimension one a solution  of an inverse problem  similar to  (P)  was given  under some special assumptions on the boundary data   \cite{Is91}. The inverse problem (P) was recently cited among few other open inverse problems in \cite{A20}. Reviews for results concerning inverse problems for parabolic equations can be found in the following books \cite{Is06,Ch09,AK04}.\\

  In this paper we show that the inverse problem (P) has a unique solution, and we derive stability estimates  for the inversion  under  some  assumptions  on the point spectrum distribution of the associated elliptic operator,  the initial state $u_0$, and the observation 
time $T$.\\

It is well  known that the  unbounded  operator $ \L: L^2(\Omega)\rightarrow L^2(\Omega),$ defined by 
\[
\L:= -\div(\a\nabla \cdot),
\]
 with a Dirichlet boundary condition on $\Gamma$,  is self-adjoint, strictly  positive operator with a compact resolvent \cite{GT15}. Its domain is given by $D(\L)= H^1_0(\Omega)\cap H^2(\Omega)$. \\
 
 The  uniqueness and stability estimate presented here depend in an intricate way on the distribution of the eigenvalues of $\L$. 
 We denote by $\lambda_k,  k\in \mathbb N^*$,  the eigenvalues of  $\L$ arranged in a non-decreasing order and  repeated according to
 multiplicity. We also introduce the  strictly ordered  eigenvalues  $\hat \lambda_k,  k\in \mathbb N^*$. Notice that the first two values 
 of both sequences coincide.
 \begin{definition}
 We say that  $\L$   satisfies the property (G) with constants $\gamma \geq 0$ and  
 $\delta>0$ if its  eigenvalues $\lambda_k, k\in \mathbb N^*$, verify the following gap condition:  
  \bean \label{gap}
  \hat \lambda_{k+1} - \hat \lambda_k &\geq&  \delta \hat \lambda_{k}^{-\gamma}, \qquad k\in \mathbb N^*.
  \eean
\end{definition}
  \begin{remark}
  It is well  known that under  a non-trapping condition   
the operator  $\L$, subject to a  Dirichlet boundary condition  satisfies the property (G) 
 with $\gamma =0$, and $\delta>0$ is a constant  depending  only on $\a$ and $\Gamma$ \cite{BLR92, AT20}.  The property (G) is also
 somehow related to the boundary observability problem  for  Shr\"odinger and  wave equations in control theory 
 \cite{AT20, Zu07}. 
  \end{remark}
  The obtained stability estimate require that the property (G) be satisfied by the operator $\L$. Therefore for $\gamma \geq 0$, and 
  $\delta >0$ some fixed constants,  we introduce the set 
  \[
\mathsf A_0 = \left\{ a \in \A:  \L \textrm{ satisfies property  (G) with fixed  constants }  \gamma \geq 0  \textrm{ and }  \delta>0  \right\}.
\]
 
\begin{theorem}  \label{main} Let $\a, \tilde  \a \in \mathsf A_0$, and $d_\Omega (x)$ be the distance of $x$ to the boundary $\Gamma$.  Denote $u(x,t)$ and $\tilde u(x,t)$ the  solutions to the system~\eqref{mainequation}
with  respectively  diffusion coefficients $\a$ and $\tilde \a $. Assume that $ \int_{\Omega} u_0(x) d_\Omega(x) dx \not=0$. Then there exist $T_0>0$ and $C>0$   depending  on 
$\mathsf A_0$, $u_0$, $n$, and $\Omega$,  such that the following stability estimate holds
\bean \label{stability}
\| \a-\tilde \a \|_{L^2(\Omega)} \leq C e^{\a_+ \lambda_1^{\Omega} T} \|u(\cdot, T) - \tilde u (\cdot, T) \|_{H^2(\Omega)},
\eean  
for all $T>T_0$, where $\lambda_1^{\Omega} $ is the first eigenvalue of the Dirichlet Laplacian on $\Omega$. 

\end{theorem}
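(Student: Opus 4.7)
The plan is to isolate the leading spectral mode of $u(\cdot,T)$, read off $\lambda_1$ and $\phi_1$ from it, then recover $\mathsf{a}$ by solving a stationary transport equation driven by $\phi_1$. Let $\{(\lambda_k,\phi_k)\}_{k\geq 1}$ and $\{(\tilde\lambda_k,\tilde\phi_k)\}_{k\geq 1}$ be the orthonormal eigenpairs of $\L$ and $\tL$ with $\phi_1,\tilde\phi_1>0$, and set $c_k=\la u_0,\phi_k\ra$, $\tilde c_k=\la u_0,\tilde\phi_k\ra$. The spectral expansions
\[
u(\cdot,T)=\sum_{k\geq 1} c_k e^{-\lambda_k T}\phi_k,\qquad \tilde u(\cdot,T)=\sum_{k\geq 1} \tilde c_k e^{-\tilde\lambda_k T}\tilde\phi_k
\]
converge in $H^2(\Omega)$, and for large $T$ the $k=1$ terms dominate. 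Multiplying the difference by $e^{\lambda_1 T}$ and using the gap assumption (G) with Weyl-type control of the tail $\sum_{k\geq 2}\tilde c_k e^{-(\tilde\lambda_k-\lambda_1)T}\tilde\phi_k$ (and analogously for $u$), I would obtain
\[
\|c_1\phi_1-\tilde c_1\tilde\phi_1\|_{H^2(\Omega)}\leq e^{\lambda_1 T}\|u(\cdot,T)-\tilde u(\cdot,T)\|_{H^2(\Omega)}+R(T),
\]
where $R(T)\to 0$ as $T\to\infty$ at a rate controlled by $(\gamma,\delta)$. Since $\lambda_1\leq \mathsf{a}_+\lambda_1^\Omega$ by min-max, this produces the exponential prefactor of the theorem.

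The hypothesis $\int_\Omega u_0\,d_\Omega\,dx\neq 0$, combined with a Hopf-type lower bound $\phi_1\gtrsim d_\Omega$ holding uniformly on $\A_0$, secures a uniform bound $|c_1|\geq c_0>0$ (and similarly $|\tilde c_1|\geq c_0$). A Rayleigh-quotient argument applied to the same spectral identity controls $|\lambda_1-\tilde\lambda_1|$ by the same right-hand side. Choosing $T_0$ large enough to absorb $R(T)$ into the main term and invoking the triangle inequality then separates magnitudes from profiles, yielding
\[
\|\phi_1-\tilde\phi_1\|_{H^2(\Omega)}+|\lambda_1-\tilde\lambda_1|\leq C e^{\mathsf{a}_+\lambda_1^\Omega T}\|u(\cdot,T)-\tilde u(\cdot,T)\|_{H^2(\Omega)}.
\]

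Finally, starting from $-\div(\mathsf{a}\nabla\phi_1)=\lambda_1\phi_1$ and the twin identity for $\tilde{\mathsf{a}}$, expansion gives for $w=\mathsf{a}-\tilde{\mathsf{a}}$ the first-order stationary transport equation
\[
\nabla\phi_1\cdot\nabla w+(\Delta\phi_1)\,w=F,\qquad w|_\Gamma=0,
\]
with source $F$ depending linearly on $\phi_1-\tilde\phi_1$, $\Delta(\phi_1-\tilde\phi_1)$ and $\lambda_1-\tilde\lambda_1$, whose $L^2(\Omega)$ norm is already controlled by the previous step. By Hopf's lemma $\nabla\phi_1$ is inward-pointing on $\Gamma$ (where $w=0$), so the gradient flow of $\phi_1$ provides characteristics along which the equation becomes a linear ODE, giving a Duhamel-type formula for $w$. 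The main obstacle is the interior critical set of $\phi_1$ (at the very least its global maximum), where the flow degenerates and the Jacobian of the characteristic map collapses, spoiling a naive $L^2$ estimate; this is the nonlinear stationary transport problem flagged in the abstract. I expect the \emph{ad hoc} method to handle this via a weighted energy estimate keyed on $|\nabla\phi_1|$ together with Morse-type control near critical points, using the $C^1$ regularity of $\A_0$ and the $C^3$ smoothness of $\Omega$ to quantify the degeneracy and close the bound $\|w\|_{L^2(\Omega)}\lesssim \|F\|_{L^2(\Omega)}$, thereby completing the proof.
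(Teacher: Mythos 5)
Your reduction to the first eigenpair already has a quantitative gap. After multiplying by $e^{\lambda_1 T}$, the remainder $R(T)$ you discard is the \emph{difference of the two spectral tails}, and you only estimate it as an absolute quantity tending to zero as $T\to\infty$. For a Lipschitz estimate at a fixed $T>T_0$ this is not enough: an additive error of size $Ce^{-cT}$, independent of the data, cannot be ``absorbed into the main term'' and would leave you with $\|\a-\ta\|_{L^2(\Omega)}\leq Ce^{\lambda_1 T}\|u-\tilde u\|_{H^2(\Omega)}+Ce^{-cT}$, which is not the claimed stability. What is needed is that the tail difference be bounded by $Ce^{-\min(\hat\lambda_2,\widehat{\tilde\lambda}_2)T}\|\a-\ta\|_{L^2(\Omega)}$, so that it can be absorbed into the \emph{left-hand side} for large $T$. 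Proving this Lipschitz dependence of the tail on $\a$ is a substantial part of the paper: it requires the eigenvalue perturbation bound $|\lambda_k-\tilde\lambda_k|\lesssim \lambda_k^{1+n/4}\|\a-\ta\|_{L^2(\Omega)}$, a perturbation bound for the spectral projections obtained from the Riesz integral under the gap condition (G), and a splitting of the series at an index $N$ determined by the size of $\|\a-\ta\|_{L^2(\Omega)}$, because the projection estimate is only valid when $\|\a-\ta\|_{L^2(\Omega)}\leq\eta\,\hat\lambda_k^{-(1+\gamma+n/4)}$. ``Weyl-type control of the tail'' does not substitute for this analysis. (The uniform lower bound on $c_1$ via $\phi_1\gtrsim d_\Omega$ is fine and is indeed how the paper uses the hypothesis on $u_0$.)

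The second and more serious gap is exactly where you flag it: the recovery of $w=\a-\ta$ from the transport equation $\nabla\phi_1\cdot\nabla w+(\Delta\phi_1)w=F$, $w|_\Gamma=0$, by integrating along the gradient flow of $\phi_1$. The characteristic method collapses on the interior critical set of $\phi_1$, and your proposed remedy (``weighted energy estimate keyed on $|\nabla\phi_1|$ together with Morse-type control near critical points'') is an expectation, not an argument: $\phi_1$ need not be a Morse function, and $C^1$ coefficients give no quantitative, $\A_0$-uniform control of the critical set, so the bound $\|w\|_{L^2(\Omega)}\lesssim\|F\|_{L^2(\Omega)}$ is left unproved at precisely the hard point. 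The paper avoids characteristics altogether. It keeps $u(\cdot,T)$ in the equation, writing $\div(\a\nabla u(\cdot,T))=-\hat\lambda_1 u(\cdot,T)+\F(\a;\cdot,T)$ with $\F=\partial_t u+\hat\lambda_1 u$, takes the difference of the two such equations, and tests it first with $1$ (controlling $|\hat\lambda_1^{-1}-\widehat{\tilde\lambda}_1^{-1}|$ through the boundary flux, where $-\partial_\nu\phi_1\geq C$) and then with $\zeta u(\cdot,T)$, $\zeta=\a^{-1}(\a/\hat\lambda_1-\ta/\widehat{\tilde\lambda}_1)$. Integration by parts produces the two nonnegative quantities $\int_\Omega\zeta^2\a|\nabla u|^2dx$ and $-\tfrac12\int_\Omega\zeta^2u\,\partial_t u\,dx$, which for large $T$ are bounded below by $e^{-2\hat\lambda_1 T}$ times $\int_\Omega\zeta^2\bigl(\phi_1^2+\mathbbm{1}_{\Omega_\varepsilon}|\nabla\phi_1|^2\bigr)dx$, and the decisive observation is that $\phi_1^2+\mathbbm{1}_{\Omega_\varepsilon}|\nabla\phi_1|^2\geq c>0$ on all of $\Omega$: a quantitative Hopf bound gives $|\nabla\phi_1|\geq C_0$ in a boundary layer, while in the interior $\phi_1\geq Cd_\Omega$ takes over exactly where $\nabla\phi_1$ may vanish. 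This zero-order compensation is the ``ad hoc method'' announced in the abstract, and it is the ingredient your proposal is missing; without it (or an equivalent mechanism) the final step does not close.
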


\begin{remark} The stability estimate  implies the uniqueness of the inverse problem (P).
The exponential growth of the Lipschitz stability constant~\eqref{stability} shows that the  inversion is in general  ill-posed. 
The exponential growth constant $\a_+ \lambda_1^{\Omega} $ can actually be replaced by $\min(\lambda_1,  \tilde \lambda_1)$, where
$\tilde \lambda_1$ is the first eigenvalue of $\tL$. The required regularity on the right hand side  of the stability estimate seems 
to be optimal, as for  the  inverse elliptic problem with interior data \cite{A88}. 

\end{remark}

The proof is based on a particular decomposition of  $-\partial_tu(x, T)$. The principal idea is to substitute $-\partial_tu(x, T)$
in  the parabolic  equation of the system~\eqref{mainequation} by $\hat \lambda_1 u(x, T) + \F(\a; x, T)$, where  
$\a\rightarrow \F(\a; x, T)$ is   a Lipschitz  non-linear function with Lipschitz constant  that decays  faster than $u(x, T)$ when 
$T$ tends towards infinity.  Moreover the function  $\F(\a; x, T)$ is independent of the data $u(x, T)$, and can be 
entirely recovered  from the knowledge $\a$, $u_0$, and $\Omega$.  In this regard, the unknown  coefficient $\a$  satisfies a nonlinear stationary transport equation
\bea
\div(\a \nabla u(x, T)) &=& - \hat \lambda_1u(x, T) +  \F(\a; x, T),  \quad  x\in \Omega.
\eea
Since $\F(\a; x, T)$ decays faster than $u(x, T)$ in  $L^2(\Omega)$ for large $T$,  the  system above can be considered as a nonlinear perturbation of a stationary linear transport equation 
\bea
\div(\a \nabla u(x, T)) &=& - \hat \lambda_1u(x, T),   \quad  x\in \Omega.
\eea
Consequently,  by solving the simplified linear equation above,  we shall be able to derive the  global stability estimate  for the
nonlinear one using classical  perturbation  methods. The detailed proof  is presented at the end of section 2. \\

 The paper is organized as follows. The first section is dedicated to some useful properties of the solution $u$ of the system
 \eqref{mainequation} including its  spectral decomposition. In section 2, we  provide the proof of the main Theorem~\ref{main}.
 In appendix \ref{App}, we recall some known useful properties of the eigenvalues and eigenfunctions of elliptic operators in a divergence form.
 
 \section{Preliminaries results} 
 We first derive  some  properties of the eigenelements of the unbounded operator $ L_a$. Considering $\ta$ as  a
 perturbation of the coefficient $\a$, we derive an upper bound of the perturbation of  eigenelements of  $ L_a$
 in terms of $\|\a-\ta\|_{L^2(\Omega)}$.

\begin{theorem} \label{interT1}
Let $\a, \ta \in \A$, and    $  (\lambda_k)_{  k\in \mathbb N^*} \subset \mathbb R^*$ (resp.  
$  (\tilde \lambda_k)_{  k\in \mathbb N^*} \subset \mathbb R^*$)  be respectively the  increasing  sequence of     eigenvalues
of $\L$ (resp. $\tL$). Then 
\bean \label{eigenIn}
|\lambda_k - \tilde \lambda_k|\leq C \min(\lambda_k, \tilde \lambda_k)^{1+\frac{n}{4}} \|\a-\ta\|_{L^2(\Omega)},
\eean
where $C>0$ is a constant that depends only on $n, \A$ and  $\Omega$. 
\end{theorem}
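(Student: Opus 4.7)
\medskip

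\noindent\textbf{Proof plan.} The natural approach is the Courant--Fischer min--max characterization combined with a careful bound on the perturbation of the Dirichlet form. Writing $a(u,v)=\int_\Omega \a\,\nabla u\cdot\nabla v\,dx$ and $\tilde a(u,v)=\int_\Omega \ta\,\nabla u\cdot\nabla v\,dx$, one has
\[
\tilde a(u,u)-a(u,u)=\int_\Omega(\ta-\a)\,|\nabla u|^2\,dx,
\]
and Hölder's inequality gives the crucial bound
\[
|\tilde a(u,u)-a(u,u)|\;\le\;\|\a-\ta\|_{L^2(\Omega)}\,\|\nabla u\|_{L^4(\Omega)}^{2}.
\]
Thus the whole problem reduces to controlling $\|\nabla u\|_{L^4(\Omega)}$ uniformly for $u$ ranging over the test space of the $k$-th min--max.

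\medskip

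\noindent Concretely, first I would take $V_k=\mathrm{span}(\phi_1,\ldots,\phi_k)$, with $\phi_j$ the $L^2$-orthonormal eigenfunctions of $\L$, and use $V_k$ as a test subspace for $\tilde\lambda_k$. For $u=\sum_{j=1}^k c_j\phi_j$ with $\|u\|_{L^2}=1$:
\begin{itemize}
\item the bound $a(u,u)\le\lambda_k$ is immediate;
\item since $\a\ge 1$, one has $\|\nabla u\|_{L^2}^{2}\le a(u,u)\le\lambda_k$;
\item elliptic regularity on the $C^3$ domain $\Omega$, applied uniformly over $\A$ (this is exactly the content I would invoke from Appendix~\ref{App}), yields $\|u\|_{H^2(\Omega)}\le C\,\|\L u\|_{L^2(\Omega)}\le C\lambda_k$, since $\|\L u\|_{L^2}^2=\sum c_j^2\lambda_j^2\le\lambda_k^2$.
\end{itemize}
Next I would interpolate: for $n\le 4$ the Gagliardo--Nirenberg/Sobolev inequality gives the embedding $H^{n/4}(\Omega)\hookrightarrow L^4(\Omega)$ applied to $\nabla u$, so
\[
\|\nabla u\|_{L^4(\Omega)}\;\le\;C\,\|\nabla u\|_{L^2(\Omega)}^{1-n/4}\,\|\nabla u\|_{H^1(\Omega)}^{n/4}\;\le\;C\,\lambda_k^{(1-n/4)/2}\,\lambda_k^{n/4}=C\,\lambda_k^{\tfrac12+\tfrac{n}{8}},
\]
hence $\|\nabla u\|_{L^4}^{2}\le C\lambda_k^{1+n/4}$. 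Plugging this into the min--max identity
\[
\tilde\lambda_k\;\le\;\max_{u\in V_k,\,\|u\|_{L^2}=1}\tilde a(u,u)\;\le\;\lambda_k+C\lambda_k^{1+n/4}\,\|\a-\ta\|_{L^2(\Omega)},
\]
and swapping the roles of $\a$ and $\ta$ yields $\lambda_k-\tilde\lambda_k\le C\tilde\lambda_k^{1+n/4}\|\a-\ta\|_{L^2}$, whence the asserted estimate with $\min(\lambda_k,\tilde\lambda_k)^{1+n/4}$.

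\medskip

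\noindent The main technical obstacle, and the reason the constant $C$ depends on $\A$ and $\Omega$, is ensuring that the elliptic regularity constant for $\L$ is uniform across $\a\in\A$: this is where the $C^1(\overline\Omega)$ control together with the lower bound $\a\ge 1$ and the $C^3$ regularity of $\Gamma$ enter, to obtain an $H^2$-estimate with constants independent of the particular coefficient $\a$. Once this uniformity is granted (and it is a standard fact recalled in the appendix), the remainder of the argument is the interpolation above together with the min--max inequality, which together produce the sharp exponent $1+n/4$.
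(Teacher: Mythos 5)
Your proposal is correct and follows essentially the same route as the paper: the Courant--Fischer min--max principle, the Cauchy--Schwarz/H\"older bound $|\tilde a(u,u)-a(u,u)|\le \|\a-\ta\|_{L^2(\Omega)}\|\nabla u\|_{L^4(\Omega)}^2$, uniform elliptic $H^2$-regularity over $\A$, and Gagliardo--Nirenberg interpolation giving $\|\nabla u\|_{L^4}^2\le C\lambda_k^{1+n/4}$ on the test space. The only cosmetic differences are that you test with the eigenfunctions of $\L$ and symmetrize at the end to obtain the $\min(\lambda_k,\tilde\lambda_k)$ factor, whereas the paper fixes $\lambda_k\ge\tilde\lambda_k$ and tests with the eigenfunctions of $\tL$ (and the uniform $H^2$ bound you invoke comes from Gilbarg--Trudinger rather than the appendix), which does not change the substance of the argument.
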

\begin{proof}
Without loss of generality we assume  that  $\lambda_k \geq  \tilde \lambda_k$.  \\

Denote  by $\phi_{k},  k \in \mathbb N^*$ (resp. 
$\tilde \phi_{k},  k \in \mathbb N^*$)  the orthonormal sequence of eigenfunctions of $\L$ (resp. $\tL$) associated to $\lambda_k, k\in \mathbb N^*$ (resp. $\lambda_k, k\in \mathbb N^*$). \\

Recall the Min-max characterization   of the eigenvalues $  (\tilde \lambda_k)_{  k\in \mathbb N^*}$  \cite{RS77}
\bean
\tilde \lambda_k  = \min_{\ba{ll}\Phi_k \subset H_0^1(\Omega)\\
\textrm{dim}(\Phi_k) = k\ea} \max_{ \phi \in \Phi_k \setminus\{0\}}
 \frac{\int_\Omega \ta |\nabla \phi|^2 dx}{ \int_{\Omega} |\phi|^2 dx}
\eean

In the expression above  the minimum is achieved  when $\Phi_k$  coincides with 
the finite dimension space generated by  $\widetilde V_k:= \{\tilde \phi_l: \; l\leq k\}$.   Therefore
\bea
\lambda_k -\tilde \lambda_k \leq \max_{ \phi \in \widetilde V_k \setminus\{0\}}
 \frac{\int_\Omega \a |\nabla \phi|^2 dx}{ \int_{\Omega} |\phi|^2 dx} -
 \max_{ \phi \in \widetilde V_k \setminus\{0\}}
 \frac{\int_\Omega \ta |\nabla \phi|^2 dx}{ \int_{\Omega} |\phi|^2 dx}.
\eea

Since  $\widetilde V_k$  is a finite dimension space the first maximum is reached at some vector  $\tilde \psi_k \in
 \widetilde V_k \setminus\{0\}$, satisfying $\int_{\Omega} |\tilde \psi_k|^2 dx =1$.  Hence 
\bean \label{fee}
\lambda_k -\tilde \lambda_k \leq 
 \int_\Omega \a |\nabla \tilde \psi_k|^2 dx -
  \int_\Omega \ta |\nabla \tilde \psi_k|^2 dx  \leq \|\a -\ta\|_{L^2(\Omega)} \left(\int_\Omega |\nabla \tilde \psi_k|^4 dx\right)^{\frac{1}{2}}.
\eean
Since  $\tilde \psi_k \in \widetilde V_k \setminus\{0\}$,  there exists a  real valued sequence $\alpha_l, l \leq k$, satisfying 
$ \sum_{l=1}^k \alpha_l^2 = 1$, and  $\tilde \psi_k= \sum_{l=1}^k \alpha_l \tilde \phi_l$.  Therefore,  $\tilde \psi_k$ verifies 
the following elliptic  equation \[\tL \tilde \psi_k = 
\sum_{l=1}^k \alpha_l \tilde \lambda_l \tilde \phi_l.\] We then 
 deduce from the classical  elliptic regularity  (Theorem 8.12  in \cite{GT15})
\bean \label{E1}
\|\tilde \psi_k\|_{H^2(\Omega)} \leq C_1\left\|\sum_{l=1}^k \alpha_l \tilde \lambda_l \tilde \phi_l\right\|_{L^2(\Omega)} \leq C_1 \tilde \lambda_k,
\eean
where $C_1>0$ depends only on $n,  \A$ and $\Omega$. \\

Classical Sobolev interpolation inequalities  for $n=2, 3$,  give \cite{GT15} 
\bean  \label{E2}
\|\nabla \tilde \psi_k\|_{L^4(\Omega)} \leq C_2  
\|\nabla \tilde \psi_k\|_{L^2(\Omega)}^{1-\frac{n}{4}}\|\tilde  \psi_k\|_{H^2(\Omega)}^{\frac{n}{4}}, 
\eean
where $C_2>0$ depends only on $n$ and $\Omega$. \\

 By a simple calculation, and using the fact that $\|\nabla \tilde \phi_l\|_{L^2(\Omega)} ^2 \leq  \tilde \lambda_l$, $l\in \mathbb N^*$,
  we obtain
\bean \label{E3}
\|\nabla \tilde \psi_k\|_{L^2(\Omega)} \leq  \tilde \lambda_k.
\eean

Combining inequalities \eqref{E1},  \eqref{E2} and \eqref{E3}, we get 
\bea 
\|\nabla \tilde \psi_k\|_{L^4(\Omega)} \leq C_3 \tilde \lambda_k^{\frac{n+4}{8}}, 
\eea
where $C_3>0$ depends only on $n, \A$ and $\Omega$.  We then deduce from \eqref {fee}
\bea
\lambda_k -\tilde \lambda_k \leq  C_3  \|\a -\ta\|_{L^2(\Omega)}  \tilde \lambda_k^{1+\frac{n}{4}},
\eea
which achieves the proof of the theorem.

\end{proof}
\begin{remark}
The estimate \eqref{eigenIn} may not be  optimal.  The objective here was to obtain an inequality with an uniform 
constant for all  functions $\a, \ta \in \A$.  
\end{remark}

\begin{theorem} \label{interT2}
Let $\a,  \ta \in \A_0$.  Let  $  P_k$ (resp.  
$  \widetilde P_k$)  be the orthogonal projection onto the eigenspace of  $\L$ (resp. $\tL$) corresponding  to the eigenvalue
$\hat \lambda_k$ (resp. $\widehat{\tilde \lambda_k}$). There exist constants $\eta>0$ and $C>0$  that
depend only on $n, \A_0$, and $\Omega$,
 such that if 
\bean \label{conda}
\|\a-\ta\|_{L^2(\Omega)} \leq \eta \max(\hat \lambda_k,  \widehat{\tilde \lambda}_k)^{-(1+\gamma+\frac{n}{4})},
\eean
then,  the  following estimate 
\bean \label{eigenfunctionIn}
\|P_k - \widetilde P_k\|_{ \mathcal{L}^2(\Omega) } \leq C 
(\max(\hat \lambda_k,  \widehat{\tilde \lambda}_k)^{\gamma+1}+1)^2  \|\a-\ta\|_{L^2(\Omega)},
\eean
holds.
\end{theorem}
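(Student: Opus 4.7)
The natural approach is the \emph{Riesz spectral projection formula}. Set
\[
r_k := \tfrac{\delta}{2}\max(\hat\lambda_k,\widehat{\tilde\lambda}_k)^{-\gamma},
\]
and let $\Gamma_k$ be the positively oriented circle of radius $r_k$ centered at $\hat\lambda_k$ in the complex plane. The plan is to represent both projections as contour integrals of the resolvents along the \emph{same} $\Gamma_k$,
\[
P_k = \frac{1}{2\pi i}\oint_{\Gamma_k}(z - \L)^{-1}\,dz, \qquad \widetilde P_k = \frac{1}{2\pi i}\oint_{\Gamma_k}(z - \tL)^{-1}\,dz,
\]
and to control their difference using the second resolvent identity, together with the Sobolev-$L^4$ gradient bound already exploited in the proof of Theorem~\ref{interT1}.

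The first step is to justify this representation under \eqref{conda}. For $\L$, the gap condition (G) directly ensures that $\Gamma_k$ encloses only the eigenvalue $\hat\lambda_k$ and that $\|(z-\L)^{-1}\|_{\mathcal L(L^2(\Omega))}\lesssim r_k^{-1}$ on $\Gamma_k$. For $\tL$, Theorem~\ref{interT1} yields $|\tilde\lambda_j-\lambda_j|\lesssim \lambda_j^{1+n/4}\|\a-\ta\|_{L^2(\Omega)}$; the smallness hypothesis \eqref{conda}, for $\eta$ small enough, makes this perturbation much smaller than $r_k$ for every $j$ that could matter. Hence only the eigenvalues of $\tL$ equal to $\widehat{\tilde\lambda}_k$ are enclosed by $\Gamma_k$, and $\|(z-\tL)^{-1}\|_{\mathcal L(L^2(\Omega))}\lesssim r_k^{-1}$ as well. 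The exponent $1+\gamma+n/4$ in \eqref{conda} is dictated exactly by the need to absorb Theorem~\ref{interT1} on the scale $r_k$ of the gap.

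The second step pairs against test functions. For $f,g\in L^2(\Omega)$, the second resolvent identity gives
\[
\langle(P_k - \widetilde P_k)f, g\rangle = \frac{1}{2\pi i}\oint_{\Gamma_k}\int_{\Omega}(\ta - \a)\,\nabla u_z\cdot\nabla v_z\,dx\,dz,
\]
where $u_z = (z-\tL)^{-1}f$ and $v_z = (\bar z-\L)^{-1}g$, using that both operators are self-adjoint and that $\L - \tL$ acts weakly as $\langle(\L-\tL)u,v\rangle = \int_\Omega(\ta-\a)\nabla u\cdot\nabla v\,dx$. Hölder's inequality bounds the inner integral by $\|\a-\ta\|_{L^2(\Omega)}\|\nabla u_z\|_{L^4(\Omega)}\|\nabla v_z\|_{L^4(\Omega)}$. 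From $\|u_z\|_{L^2}\lesssim r_k^{-1}\|f\|_{L^2}$ and $\tL u_z = zu_z - f$, elliptic regularity and the identity $\int\ta|\nabla u_z|^2 = \langle\tL u_z,u_z\rangle$ yield
\[
\|u_z\|_{H^2(\Omega)}\lesssim \hat\lambda_k r_k^{-1}\|f\|_{L^2(\Omega)}, \qquad \|\nabla u_z\|_{L^2(\Omega)}\lesssim \hat\lambda_k^{1/2}r_k^{-1}\|f\|_{L^2(\Omega)}.
\]
The interpolation $\|\nabla u_z\|_{L^4}\lesssim \|\nabla u_z\|_{L^2}^{1-n/4}\|u_z\|_{H^2}^{n/4}$ used in Theorem~\ref{interT1}, together with its analogue for $v_z$, plugged into the formula, multiplied by the length $2\pi r_k$ of $\Gamma_k$, and combined with $r_k\sim \hat\lambda_k^{-\gamma}$, produces the claimed estimate; the $+1$ in $(\cdot+1)^2$ simply accommodates the low-frequency regime where $r_k^{-1}$ is not the dominant factor.

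The main technical obstacle is that $\a-\ta$ is controlled only in $L^2(\Omega)$, so $(\L-\tL)u_z$ cannot be estimated in $L^2$ directly: the entire argument must be carried out inside the bilinear form, where each test function contributes one factor of $\|\nabla\cdot\|_{L^4}$ exactly as in Theorem~\ref{interT1}. A secondary point requiring care is the uniform separation of the spectra of $\L$ and $\tL$ along $\Gamma_k$, which is the very reason the smallness threshold in \eqref{conda} carries the exponent $-(1+\gamma+n/4)$.
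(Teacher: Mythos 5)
Your proposal is correct and follows essentially the same route as the paper: the Riesz contour representation of both projections over a common circle of radius comparable to the spectral gap, spectral separation of $\mathsf L_{\tilde{\mathsf a}}$ guaranteed by Theorem~\ref{interT1} together with the smallness condition \eqref{conda}, the second resolvent identity handled weakly through the bilinear form $\int_\Omega(\a-\ta)\nabla u\cdot\nabla v\,dx$, and H\"older plus elliptic regularity with $L^4$ gradient bounds to reach \eqref{eigenfunctionIn}. The only differences (pairing against arbitrary $f,g\in L^2(\Omega)$ rather than the span $W_k$, and using the interpolation inequality of Theorem~\ref{interT1} instead of the direct Sobolev embedding $H^2(\Omega)\hookrightarrow W^{1,4}(\Omega)$) are cosmetic.
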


\begin{proof}
In the proof $C>0$ denotes a generic constant depending $n$, $\A_0$, and $\Omega$. Without loss of generality 
we further assume that $\hat \lambda_k \geq  \widehat{\tilde \lambda}_k$.\\

Since $\a, \ta \in \A_0$, the gap condition \eqref{gap}  implies 
\bean \label{isolated}
B_{\rho_k}(\hat \lambda_k) \cap \{ \hat \lambda_l, \; l\in \mathbb N^*\} = \{\hat \lambda_k\},\qquad B_{\rho_k}(\widehat{ \tilde \lambda}_k) \cap \{(\widehat{  \tilde \lambda}_l, \; l\in \mathbb N^*\} = \{\widehat{\tilde \lambda}_k\},
\eean
where $B_{\rho_k}(z)$   is the complex disc of center 
$z \in \mathbb C$, and radius 
$\rho_k=  \frac{\delta}{4\hat \lambda_k^\gamma}$.  \\

On the other hand  estimate \eqref{eigenIn}  leads to
\bean \label{eigenIn2}
|\hat \lambda_k - \widehat{\tilde \lambda}_k|\leq C{\widehat{\tilde \lambda}}_k^{1+\frac{n}{4}} \|\a-\ta\|_{L^2(\Omega)}.
\eean

Now, combining inequalities \eqref{eigenIn2} and  \eqref{conda}, we have 

\bean \label{lambdaE1}
|\hat \lambda_k - \widehat{\tilde \lambda}_k|\leq C \eta \delta^{-1} \rho_k.
\eean

Choosing  $\eta>0$ small enough such that $C\eta < 1$, we  obtain 
\bean \label{lambdaE2}
\widehat{\tilde \lambda}_k \in  B_{\rho_k}(\hat \lambda_k).
\eean
Therefore, we also have 
\bean \label{isolated}
B_{\rho_k}(\hat{ \lambda}_k) \cap \{\widehat{  \tilde \lambda}_l, \; l\in \mathbb N^*\} = \{\widehat{\tilde \lambda}_k\}.
\eean
Consequently,  the resolvents  $
(\lambda I - \L)^{-1}$ and $ (\lambda I - \tL)^{-1}$
are well defined  as operators from $L^2(\Omega)$ onto $H^1_0(\Omega)\cap H^2(\Omega)$ for all $\lambda \in 
\partial B_{\rho_k}(\hat \lambda_k)$. In addition,
by the well-known Riesz formula,  we get \cite{Ka13}
\bea
P_k = -\frac{1}{2\iota \pi}\int_{|\lambda -\hat \lambda_k|=\rho_k} (\lambda I - \L)^{-1} d\lambda,\qquad 
\widetilde P_k = -\frac{1}{2\iota \pi}\int_{|\lambda -\hat {\lambda}_k|=\rho_k} (\lambda I - \L)^{-1} d\lambda,
\eea
where $\iota \in \mathbb C$ is the imaginary complex  number, and $I$ is the identity operator. \\

Hence
\bea
P_k - \widetilde P_k = \frac{1}{2\iota \pi}\int_{|\lambda -\hat {\lambda}_k|=\rho_k} (\lambda I - \L)^{-1}(\L-\tL)  (\lambda I - \tL)^{-1}d\lambda.
\eea
Since $P_k$ and $\widetilde P_k $ are orthogonal projections, $P_k - \widetilde P_k$  is  a self-adjoint  bounded operator  from $L^2(\Omega)$ to itself.\\

 Consequently
\bean \label{projecE1}
\|P_k - \widetilde P_k\|_{ \mathcal{L}^2(\Omega) } &=& (2\pi)^{-1} \sup_{\lambda \in \partial B_{\rho_k}(\hat \lambda_k),\,  f\in W _k,\, \|f\|_{L^2(\Omega)} = 1} 
|\left \la (\L-\tL)\tilde u_f^\lambda,  u_f^\lambda \right \ra_{L^2(\Omega)}|, \nonumber \\
 &=& (2\pi)^{-1}  \sup_{\lambda \in \partial B_{\rho_k}(\hat \lambda_k),\,  f\in W _k,\, \|f\|_{L^2(\Omega)} = 1} 
 \left| \int_{\Omega} (\a-\ta) \nabla u_f^\lambda \nabla \tilde u_f^\lambda dx \right|, \nonumber \\
 &\leq & (2\pi)^{-1}  \| \a-\ta \|_{L^2(\Omega)}  \sup_{\lambda \in \partial B_{\rho_k}(\hat \lambda_k),\,  f\in W _k,\, \|f\|_{L^2(\Omega)} = 1} 
\|\nabla u_f^\lambda\|_{L^4(\Omega)}\|\nabla \tilde u_f^\lambda\|_{L^4(\Omega)},
\eean
where $u_f^\lambda=  (\lambda I - \L)^{-1}f, \; \tilde u_f^\lambda =  (\lambda I - \tL)^{-1}f,$ and 
$W_k$ is the finite dimension   vector space in $L^2(\Omega)$,  spanned by the eigenfunctions associated to $\hat \lambda_k$,
and $\widehat{\tilde  \lambda}_k$.\\

By construction, we have 
\bean \label{ufE1}
\|u_f^\lambda\|_{L^2(\Omega)},  \|\tilde u_f^\lambda\|_{L^2(\Omega)} \leq \frac{1}{\rho_k}
\eean
Similar to the proof of Theorem \ref{interT1},  we deduce from the classical  elliptic regularity  
\bea
\| u_f^\lambda \|_{H^2(\Omega)} \leq  C (\lambda\|u_f^\lambda\|_{L^2(\Omega)}+1),\quad
\|\tilde u_f^\lambda \|_{H^2(\Omega)} \leq  C (\lambda\|u_f^\lambda\|_{L^2(\Omega)}+1),
\eea
which associated to  inequalities  \eqref{ufE1},  provide 
\bean\label{ufE2}
\|u_f^\lambda \|_{H^2(\Omega)}, \, \|\tilde u_f^\lambda \|_{H^2(\Omega)}\leq  C(\frac{\lambda }{\rho_k}+1).
\eean

Sobolev embedding  Theorem  gives \cite{GT15} 
\bean  \label{ufE3}
\|\nabla  u_f^\lambda \|_{L^4(\Omega)} \leq C  \| u_f^\lambda \|_{H^2(\Omega)}, \quad 
\|  \nabla \tilde u_f^\lambda \|_{L^4(\Omega)} \leq C 
\| \tilde u_f^\lambda \|_{H^2(\Omega)}.
\eean
Combining estimates \eqref{ufE2} and \eqref{ufE3}, we finally obtain 
\bean \label{ufE4}
\|\nabla  u_f^\lambda \|_{L^4(\Omega)}, \, \|\nabla  \tilde u_f^\lambda \|_{L^4(\Omega)} \leq C 
(\frac{\lambda}{\rho_k}+1).
\eean
We infer from \eqref{projecE1} 
\bea
\|P_k - \widetilde P_k\|_{ \mathcal{L}^2(\Omega) } &\leq &   C (\hat \lambda_k^{\gamma+1}+1)^2 \| \a-\ta \|_{L^2(\Omega)}.
\eea

\end{proof}
\section{Proof of Theorem~\ref{main}}
We first introduce the nonlinear function $\F(\a; x, T)$, and show  that its Lipschitz continuous modulus with respect 
to $\a$,  decays faster than  $u(x, T)$ for large $T$. Without loss of generality we further
 assume that   $\int_{\Omega} u_0(x) d_{\Omega}(x) dx > 0$.\\

Define for  $\a \in \A_0$ and $T>0$, the nonlinear  function $\F(\a; x, T) \in L^2(\Omega)$,  by 
\bean \label{functionF}
\F(\a; x, T) &=& \partial_tu(x, T)+ \hat \lambda_1 u(x, T), \qquad x\in \Omega,
\eean
where $u$ is the unique solution of the system  ~\eqref{mainequation}. 
\begin{theorem} \label{estimF}
Let  $\a, \ta \in \A_0$. Then there exists a constant $C>0$  that depends only on $\theta, \Omega, n, u_0$ and 
$\A_0$, such that the inequality 
\bean \label{estimFeq}
\| \F(\a; x, T) - \F(\ta; x, T)\|_{L^2(\Omega)} \leq C e^{-\min(\hat \lambda_2, \widehat{\tilde \lambda}_2)T} \|\a -\ta\|_{L^2(\Omega)},
\eean
is valid for all $T\geq 1$.
\end{theorem}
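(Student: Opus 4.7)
The plan is to expand both $\F(\a;\cdot,T)$ and $\F(\ta;\cdot,T)$ in the eigenbases of $\L$ and $\tL$ respectively, and then to compare the resulting series term by term, feeding in the perturbation estimates of Theorems~\ref{interT1}--\ref{interT2}. Using the semigroup identity $u(\cdot,T)=\sum_{j\geq 1}e^{-\hat\lambda_j T}P_j u_0$ together with $\partial_t u=-\L u$, one finds
\[
\F(\a;\cdot,T) \;=\; \sum_{j\geq 2}(\hat\lambda_1-\hat\lambda_j)\,e^{-\hat\lambda_j T}\,P_j u_0,
\]
the $j=1$ mode cancelling identically, and similarly for $\F(\ta;\cdot,T)$ with $\widehat{\tilde\lambda}_j$ and $\widetilde P_j$. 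This cancellation, engineered by the extra term $\hat\lambda_1 u$ in the definition of $\F$, is exactly what produces $e^{-\hat\lambda_2 T}$ decay instead of $e^{-\hat\lambda_1 T}$. By Parseval and the elementary fact that $\sup_{\lambda\geq\hat\lambda_2}\lambda^{2}e^{-2(\lambda-\hat\lambda_2)(T-1)}\leq C$ for $T\geq 1$, one gets the a priori bound $\|\F(\a;\cdot,T)\|_{L^2(\Omega)}\leq C\,e^{-\hat\lambda_2 T}\|u_0\|_{L^2(\Omega)}$, and analogously for $\ta$, whence $\|\F(\a;\cdot,T)-\F(\ta;\cdot,T)\|_{L^2(\Omega)}\leq C\,e^{-\mu T}$ with $\mu:=\min(\hat\lambda_2,\widehat{\tilde\lambda}_2)$.

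This fallback already settles the inequality when $\|\a-\ta\|_{L^2(\Omega)}$ is bounded below by some constant $c_0>0$ depending only on $\A_0,n,\Omega,u_0$. I can therefore restrict to $\|\a-\ta\|_{L^2(\Omega)}\leq c_0$ and choose $c_0$ so small that Theorem~\ref{interT2} applies at every index $j$ with $\max(\hat\lambda_j,\widehat{\tilde\lambda}_j)^{1+\gamma+n/4}\|\a-\ta\|_{L^2(\Omega)}\leq\eta$. Letting $N$ be the largest such index, Weyl's law $\hat\lambda_j\gtrsim j^{2/n}$ puts $\hat\lambda_{N+1}$ at the order of $\|\a-\ta\|_{L^2(\Omega)}^{-1/(1+\gamma+n/4)}$.

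For $j\leq N$ I split the $j$-th term of $\F(\a;\cdot,T)-\F(\ta;\cdot,T)$ into three contributions:
\begin{align*}
(\hat\lambda_1-\hat\lambda_j)\,e^{-\hat\lambda_j T}(P_j-\widetilde P_j)u_0 \;&+\; \bigl[(\hat\lambda_1-\hat\lambda_j)-(\widehat{\tilde\lambda}_1-\widehat{\tilde\lambda}_j)\bigr]e^{-\hat\lambda_j T}\,\widetilde P_j u_0 \\
&+\;(\widehat{\tilde\lambda}_1-\widehat{\tilde\lambda}_j)\bigl(e^{-\hat\lambda_j T}-e^{-\widehat{\tilde\lambda}_j T}\bigr)\widetilde P_j u_0.
\end{align*}
The first piece is bounded via Theorem~\ref{interT2}, the second by Theorem~\ref{interT1} applied at indices $1$ and $j$, and the third by combining $|e^{-aT}-e^{-bT}|\leq T\,e^{-\min(a,b)T}|a-b|$ with Theorem~\ref{interT1}. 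In each case the bound is proportional to $\|\a-\ta\|_{L^2(\Omega)}$, and its $T$-dependence is dominated by $T\,e^{-\mu T}e^{-(\hat\lambda_j-\mu)T}$ times a polynomial in $\hat\lambda_j$; summing over $j\leq N$, the polynomial growth is absorbed into $e^{-(\hat\lambda_j-\mu)T}$ thanks to $T\geq 1$ and Weyl's law. The tail $j>N$ is handled by applying the a priori bound from the first step to the partial sum beyond $N$, which yields $C\,e^{-\hat\lambda_{N+1}T}$; the defining inequality for $N$ then converts this into the required $\|\a-\ta\|_{L^2(\Omega)}\,e^{-\mu T}$ control.

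The main technical obstacle lies precisely in this last bookkeeping step: the smallness constraint in Theorem~\ref{interT2} forces a cutoff $N$ that degrades as $\|\a-\ta\|_{L^2(\Omega)}\to 0$, while the tail beyond $N$ is controlled only by raw decay of $\F$ and must be traded for a factor of $\|\a-\ta\|_{L^2(\Omega)}$ through the defining inequality for $N$ and the hypothesis $T\geq 1$. The same hypothesis is what allows the linear-in-$T$ factor from the mean value theorem, together with the polynomial prefactors $(\hat\lambda_j^{\gamma+1}+1)^2$ and $\hat\lambda_j^{1+n/4}$ from the perturbation theorems, to be absorbed harmlessly into $e^{-(\hat\lambda_j-\mu)T}$.
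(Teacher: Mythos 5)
Your overall architecture is the same as the paper's: expand $\F$ in the eigenprojections with the first mode cancelled, estimate the low modes by combining the eigenvalue perturbation bound (Theorem~\ref{interT1}) with the projection perturbation bound (Theorem~\ref{interT2}), cut the series at the last index $N$ where the smallness condition \eqref{conda} holds, and trade the raw $e^{-\hat\lambda_{N+1}T}$ decay of the tail for a factor $\|\a-\ta\|_{L^2(\Omega)}$ through the defining inequality for $N$. This is exactly the paper's $\F_1+\F_{21}+\F_{22}$ splitting, merely reorganized mode by mode; your preliminary reduction to $\|\a-\ta\|_{L^2(\Omega)}\le c_0$ is harmless but unnecessary, since the tail mechanism already covers the case of large $\|\a-\ta\|_{L^2(\Omega)}$.

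There is, however, a concrete gap in your absorption of the linear factor of $T$ produced by $|e^{-aT}-e^{-bT}|\le T\,e^{-\min(a,b)T}|a-b|$. You claim this factor is absorbed into $e^{-(\hat\lambda_j-\mu)T}$, but at the bottom mode $j=2$ that exponential is identically $1$ when $\hat\lambda_2\le\widehat{\tilde\lambda}_2$, and otherwise equals $e^{-(\hat\lambda_2-\widehat{\tilde\lambda}_2)T}$ with an exponent that can be arbitrarily small (it is itself $O(\|\a-\ta\|_{L^2(\Omega)})$ by Theorem~\ref{interT1}); so nothing absorbs the lone $T$, and your argument as written yields $C(1+T)e^{-\mu T}\|\a-\ta\|_{L^2(\Omega)}$ rather than $Ce^{-\mu T}\|\a-\ta\|_{L^2(\Omega)}$. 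The paper meets the same factor, as $(1+\beta_kT)$ in its estimate of $\F_1$, and absorbs it differently, via $(1+\beta_kT)^2e^{-2\beta_kT}\le Ce^{-\beta_kT}$, i.e. by giving up part of the exponential weight; note that its displayed chain then only produces $\|\F_1\|_{L^2(\Omega)}^2\le Ce^{-\beta_2 T}\|\a-\ta\|_{L^2(\Omega)}^2$, hence the rate $e^{-\beta_2 T/2}$ after taking square roots. To reach the literal rate $e^{-\mu T}$ with a $T$-uniform constant an additional argument is needed that neither write-up supplies; the safe repair is to state the estimate with an extra factor $(1+T)$ (or with a slightly reduced exponent), which is all that is used downstream: in Proposition~\ref{mainProp} and at the end of the proof of Theorem~\ref{main} such a loss is always paired with the uniform gap \eqref{gap2}, so the conclusion survives. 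Separately, your auxiliary claim $\sup_{\lambda\ge\hat\lambda_2}\lambda^{2}e^{-2(\lambda-\hat\lambda_2)(T-1)}\le C$ fails at $T=1$; replace it by $e^{-2(\lambda-\hat\lambda_2)T}\le e^{-2(\lambda-\hat\lambda_2)}$ for $T\ge1$, together with the upper bound on $\hat\lambda_2$ from Lemma~\ref{lemA3} --- a trivial fix.
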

\begin{proof} In the proof $C>0$ denotes a generic constant depending  on $n$, $\A_0$, $u_0$, and  $\Omega$.\\

We start the proof by writing the decomposition of  $\F(\a; x, T)$ (resp. $\F(\ta; x, T)$) in terms  of the eigenfunctions of the elliptic operator 
$\L$ (resp. $\tL$). Recall  $  P_k$ (resp.   $  \widetilde P_k$)   the orthogonal projection onto the eigenspace of  $\L$ (resp. $\tL$) associated   to the eigenvalue $\hat \lambda_k$ (resp. $\widehat{\tilde \lambda}_k $). \\

It is well known that $u$ and $\tilde u$ have the following spectral  decomposition \cite{Ch09}
\bea
u(x, t) \;=\; \sum_{k=1}^\infty e^{-\hat \lambda_k t} P_k u_0(x); \qquad  \tilde 
u(x, t) \;=\; \sum_{k=1}^\infty e^{-\widehat{\tilde \lambda}_k t} \widetilde P_k u_0(x)
\eea

Forward calculations yield 
\bea
\F(\a; x, T) = \sum_{k=2}^\infty  (\hat \lambda_k - \hat \lambda_1)e^{-\hat \lambda_k T} P_k u_0(x); \qquad  \tilde 
 \F(\a; x, T) = \sum_{k=2}^\infty(\widehat{\tilde \lambda_k} - \widehat{\tilde \lambda_1}
 ) e^{-\widehat{\tilde \lambda}_k T} \widetilde P_k u_0(x).
\eea
Hence 
\bea
\F(\a; x, T) - \F(\ta; x, T)= \\
 \sum_{k=2}^\infty \left[ (\hat \lambda_k - \hat \lambda_1)e^{-\hat \lambda_k T}
-  (\widehat{\tilde \lambda_k} - \widehat{\tilde \lambda_1}
 ) e^{-\widehat{\tilde \lambda}_k T} \right] \widetilde P_k u_0(x)+
 \sum_{k=2}^\infty (\hat{ \lambda_k} - \hat{\lambda_1}
 ) e^{-\hat{ \lambda}_k T}   \left[P_k -\widetilde P_k\right]u_0(x)\\= \F_1+\F_2.
\eea

Let $\beta_k, k\in \mathbb N\setminus \{0, 1\}, $ defined by
\bea
\beta_k = \min(\hat  \lambda_k, \widehat{\tilde \lambda_k}).
\eea
Then 
\bea
\| \F_1 \|_{L^2(\Omega)}^2 \leq   \sum_{k=2}^\infty \left[ |\hat  \lambda_k  - \widehat{\tilde \lambda}_k|(1+\beta_kT) +
|\hat  \lambda_1  - \widehat{\tilde \lambda}_1| \right]^2 e^{-2\beta_kT} \|u_0\|_{L^2(\Omega)}^2. 
\eea
Using results of Theorem \ref{interT1} and Lemma~\ref{lemA2}, we obtain 

\bea
\| \F_1 \|_{L^2(\Omega)}^2  \hspace{-2mm}&\leq& \hspace{-2mm} C  \sum_{k=2}^\infty \left[ \beta_k^{1+\frac{n}{4}}(1+\beta_kT) +
\beta_1^{1+\frac{n}{4}}\right]^2 e^{-2\beta_kT}\|u_0\|_{L^2(\Omega)}^2 \|\a -\ta\|_{L^2(\Omega)}^2,\\
 \hspace{-2mm}&\leq& \hspace{-2mm} C  \sum_{k=2}^\infty \beta_k^{2+\frac{n}{2}} e^{-\beta_kT}\|u_0\|_{L^2(\Omega)}^2 \|\a -\ta\|_{L^2(\Omega)}^2  \leq Ce^{-\beta_2T}\sum_{k=2}^\infty \beta_k^{2+\frac{n}{2}} e^{-(\beta_k-\beta_2)}\|u_0\|_{L^2(\Omega)}^2 \|\a -\ta\|_{L^2(\Omega)}^2.
\eea
Note that  by Weyl's asymptotic formula, we have $\lambda_k \sim C k^{\frac{2}{n}}$ for large $k$, which 
guarantees the convergence of the series above \cite{RS77, He06}.\\

The results of Lemma~\ref{lemA3} imply 
\bean \label{first}
\| \F_1 \|_{L^2(\Omega)} &\leq& Ce^{- \beta_2 T}\|\a -\ta\|_{L^2(\Omega)}.
\eean
 Since the orthogonality  of the  terms of the series $\F_2$ is no longer true, and the 
 fact that the perturbation does not affect uniformly the eigenfunctions,  deriving an upper
 bound for  $\| \F_2 \|_{L^2(\Omega)}$ is more involved. \\
 
 Recall that the sequences $\hat \lambda_k$  and $\widehat{\tilde \lambda}_k$  are strictly increasing. Let 
 $N\in \mathbb N^*$ be the smallest integer satisfying 
 \bean \label{NN}
\|\a-\ta\|_{L^2(\Omega)} >  \eta \max(\hat \lambda_k,  \widehat{\tilde \lambda}_k)^{-(1+\gamma+\frac{n}{4})}
\geq \eta \beta_k^{-(1+\gamma+\frac{n}{4})},\qquad \forall
k\geq N,
\eean
 where $\eta>0$ is the  constant  introduced in Theorem \ref{interT2}.\\
 
Next, we split  $\F_2$ into two parts: 
 
 \bean \label{split}
 \F_2= \sum_{k=2}^{N-1} (\hat{ \lambda}_k - \hat{\lambda}_1
 ) e^{-\hat{ \lambda}_k T}   \left[P_k -\widetilde P_k\right]u_0(x) + \sum_{k=N}^\infty (\hat{ \lambda}_k - \hat{\lambda}_1
 ) e^{-\hat{ \lambda}_k T}   \left[P_k -\widetilde P_k\right]u_0(x) = \F_{21} + \F_{22},
 \eean
  with the convention that the  first sum $F_{21} = 0$ when $N=2$.\\

  We deduce from \eqref{NN} the following estimate
  \bea
  \| \F_{22} \|_{L^2(\Omega)}^2 &\leq&\eta^{-2} \sum_{k=N}^\infty (\hat{ \lambda}_k - \hat{\lambda}_1
 )^2 \beta_k^{2(1+\gamma+\frac{n}{4})} e^{-2 \beta_k T} 
 \|u_0\|_{L^2(\Omega)}^2\|\a-\ta\|_{L^2(\Omega)}^2,\\
 &\leq &\eta^{-2}  e^{-2 \beta_k T} \sum_{k=2}^\infty (\hat{ \lambda}_k - \hat{\lambda}_1
 )^2 \beta_2^{2(1+\gamma+\frac{n}{4})} e^{-2 (\beta_k-\beta_2)}  \|u_0\|_{L^2(\Omega)}^2\|\a-\ta\|_{L^2(\Omega)}^2.
  \eea
  Again using the  upper and lower bounds  derived in Lemma~\ref{lemA3}, we obtain
  
 \bean \label{SS1}
  \| \F_{22} \|_{L^2(\Omega)} &\leq& Ce^{- \beta_2 T}\|\a -\ta\|_{L^2(\Omega)}.
  \eean
  
 On the other hand, we have 
 \bea
 \| \F_{21} \|_{L^2(\Omega)}^2 &\leq& \left\|\sum_{k=2}^{N-1} (\hat{ \lambda}_k - \hat{\lambda}_1
 ) e^{-\hat{ \lambda}_k T}   \left|\left[P_k -\widetilde P_k\right]u_0\right| \right\|_{L^2(\Omega)}^2. 
 \eea
 Cauchy-Shwartz inequality gives
  \bea
 \| \F_{21} \|_{L^2(\Omega)}^2 &\leq& \left(\sum_{k=2}^{N-1} (\hat{ \lambda}_k - \hat{\lambda}_1
 )^2 e^{-\hat{ \lambda}_k T} \right)\left(  \sum_{k=2}^{N-1} e^{-\hat{ \lambda}_k T} \left\|P_k -\widetilde P_k \right\|_{\mathcal L(L^2(\Omega))}^2\right)
  \|u_0\|_{L^2(\Omega)}^2. 
 \eea
By construction, we have  
\bean \label{NN2}
\|\a-\ta\|_{L^2(\Omega)} \leq   \eta \max(\hat \lambda_k,  \widehat{\tilde \lambda}_k)^{-(1+\gamma+\frac{n}{4})},\qquad \forall 
k\leq N-1.
\eean
Using the results  of Theorem~\ref{interT2},  leads to
 \bea
 \| \F_{21} \|_{L^2(\Omega)}^2\\  \leq C\left(\sum_{k=2}^{N-1} (\hat{ \lambda}_k - \hat{\lambda}_1
 )^2 e^{-\hat{ \lambda}_k T}\right)\left(   \sum_{k=2}^{N-1} (\max(\hat \lambda_k,  \widehat{\tilde \lambda}_k)^{\gamma+1}+1)^4
 e^{-\hat{ \lambda}_k T}\right)
  \|u_0\|_{L^2(\Omega)}^2  \|\a-\ta\|_{L^2(\Omega)}^2, \\
  \leq  Ce^{-2  \beta_2 T} \left( \sum_{k=2}^{\infty} (\hat{ \lambda}_k - \hat{\lambda}_1
 )^2 e^{-(\hat{ \lambda}_k - \hat{ \lambda}_2)} \right) \left( \sum_{k=2}^{\infty} (\max(\hat \lambda_k,  \widehat{\tilde \lambda}_k)^{\gamma+1}+1)^4
 e^{-(\hat{ \lambda}_k- \hat \lambda_2)}\right)
  \|u_0\|_{L^2(\Omega)}^2  \|\a-\ta\|_{L^2(\Omega)}^2.
 \eea
 Applying again the bounds in Lemma~\ref{lemA3}, we get
 \bean 
 \label{SS2}
  \| \F_{21} \|_{L^2(\Omega)} &\leq& Ce^{- \beta_2 T}\|\a -\ta\|_{L^2(\Omega)}.
 \eean
Finally,  combining inequalities \eqref{first}, \eqref{SS1}, and \eqref{SS2}, conducts  to the desired estimate.
\end{proof}
We  next study the decay behavior of $\partial_t u(x, T)$ and $|\nabla u(x, T)|^2$ as $T$ tends towards infinity.
\begin{theorem}
Let  $\a \in  \A_0$, and $u$ be the unique solution to the system \eqref{mainequation}. Then there exist $T_1>0$, $\varepsilon_0>0$,
and 
$C>0$   depending only 
on $\A_0$, $\Omega, n $ and $u_0$ such that the following inequalities 
\bean \label{NN0}  u(x, T) &\geq& C e^{-\lambda_1 T}\phi_1(x), \qquad \hspace{8mm} \forall x\in \Omega,\\ 
\label{NN1}
-\partial_tu(x, T) &\geq& C e^{-\lambda_1 T}\phi_1(x), \qquad \hspace{8mm}  \forall x\in \Omega,\\ \label{NN2}
|\nabla u(x, T)|^2 &\geq& C e^{-2\lambda_1 T} |\nabla \phi_1(x)|^2, \qquad \forall x\in \Omega_\varepsilon,
\eean 
hold for all $T\geq T_1$, and $0<\varepsilon< \varepsilon_0$,  with  $\Omega_\varepsilon= \{x\in \Omega:  \; d_{\Omega}(x) < \varepsilon \}$.

\end{theorem}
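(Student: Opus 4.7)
The strategy is a spectral decomposition that isolates the principal mode and bounds everything else as a strictly faster-decaying remainder. Writing
\[
u(x,T) = c_1 e^{-\lambda_1 T}\phi_1(x) + R(x,T), \quad c_1 := \la u_0,\phi_1\ra_{L^2(\Omega)}, \quad R(x,T) := \sum_{k\geq 2}e^{-\lambda_k T}\la u_0,\phi_k\ra\phi_k(x),
\]
where $\{\phi_k\}$ is an $L^2$-orthonormal eigenbasis of $\L$ with $\phi_1>0$ the principal eigenfunction (simple by Krein--Rutman), Parseval gives $\|R(\cdot,T)\|_{L^2(\Omega)}\leq e^{-\hat\lambda_2 T}\|u_0\|_{L^2(\Omega)}$, and $\hat\lambda_2>\lambda_1$ (simplicity of $\lambda_1$) makes the remainder decay strictly faster than the leading term. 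I would first show $c_1>0$: Hopf's lemma applied to $\phi_1$ yields $\phi_1(x)\asymp d_\Omega(x)$ on $\overline\Omega$ with constants uniform over $\a\in\A_0$, which combined with the standing hypothesis $\int_\Omega u_0 d_\Omega\,dx>0$ of Theorem~\ref{main} gives $c_1\geq c(u_0,\A_0,\Omega)>0$.

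Next I would upgrade the $L^2$-bound on $R$ to a $C^1$-bound up to $\Gamma$, uniformly in $\a\in\A_0$. Writing $\L R(\cdot,T) = \sum_{k\geq 2}\lambda_k e^{-\lambda_k T}\la u_0,\phi_k\ra\phi_k$ and estimating termwise,
\[
\|\L R(\cdot,T)\|_{L^2(\Omega)}^2 = \sum_{k\geq 2}\lambda_k^2 e^{-2\lambda_k T}|\la u_0,\phi_k\ra|^2 \leq C(T_1)e^{-2\hat\lambda_2 T}\|u_0\|_{L^2(\Omega)}^2
\]
for $T\geq T_1$, where $C(T_1)$ is finite because Weyl's law $\lambda_k\sim C k^{2/n}$ ensures $\sup_k \lambda_k^2 e^{-(\lambda_k-\hat\lambda_2)T_1}<\infty$. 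Classical $H^2$-regularity for $\L$ and the Sobolev embedding $H^2\hookrightarrow C^1$ (valid for $n\leq 3$) then yield $\|R(\cdot,T)\|_{C^1(\overline\Omega)}\leq Ce^{-\hat\lambda_2 T}$ for $T\geq T_1$; the corresponding bound for $\partial_tR=-\L R$ follows analogously after absorbing an extra power of $\lambda_k$ into the same Weyl argument.

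With these bounds in hand, the three inequalities follow. For \eqref{NN0}, since $R(\cdot,T)$ vanishes on $\Gamma$, the $C^1$-bound gives $|R(x,T)|\leq \|\nabla R\|_{L^\infty(\Omega)}\,d_\Omega(x)\leq C'e^{-\hat\lambda_2 T}\phi_1(x)$ via the Hopf lower bound $\phi_1\geq c_\phi d_\Omega$; hence
\[
u(x,T)\geq \bigl(c_1 - C'e^{-(\hat\lambda_2-\lambda_1)T}\bigr)e^{-\lambda_1 T}\phi_1(x)\geq \tfrac{c_1}{2}e^{-\lambda_1 T}\phi_1(x)
\]
for $T\geq T_1$ large enough. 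Inequality \eqref{NN1} follows identically starting from $-\partial_tu(x,T)=\lambda_1 c_1 e^{-\lambda_1 T}\phi_1(x)+\sum_{k\geq 2}\lambda_k e^{-\lambda_k T}\la u_0,\phi_k\ra\phi_k(x)$ and using the $C^1$-bound on $\partial_t R$. For \eqref{NN2}, Hopf's lemma for $\phi_1$ gives $|\nabla\phi_1(x)|\geq c_0>0$ on a boundary strip $\Omega_{\varepsilon_0}$ (with $c_0,\varepsilon_0$ uniform over $\A_0$); decomposing $\nabla u = c_1 e^{-\lambda_1 T}\nabla\phi_1+\nabla R$ and using the $C^1$-control on $R$, the remainder becomes negligible on $\Omega_{\varepsilon_0}$ for $T\geq T_1$, yielding the claimed squared lower bound.

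The principal technical obstacle is Step 2: converting the $L^2$-tail bound on $R$ into a uniform $C^1$-bound up to $\Gamma$, with constants independent of $\a\in\A_0$, relies on quantitative elliptic regularity for divergence-form operators with $C^1$ coefficients (in the spirit of the estimates derived in the proof of Theorem~\ref{interT1}) together with Weyl's law for summability of the high-mode series. A secondary subtlety is the positivity $c_1>0$ in Step 1, which requires uniform two-sided Hopf bounds $\phi_1\asymp d_\Omega$ over the admissible class $\A_0$.
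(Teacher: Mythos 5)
Your overall plan (isolate the principal mode, show the tail decays like $e^{-\hat\lambda_2 T}$, and absorb it for large $T$) is the same skeleton as the paper's proof, but the step you yourself identify as the crux — upgrading the $L^2$ tail bound on $R$ to a $C^1(\overline\Omega)$ bound — fails as written. The embedding you invoke, $H^2(\Omega)\hookrightarrow C^1(\overline\Omega)$ ``valid for $n\leq 3$,'' is false in the dimensions of this paper: for $n=2,3$ one only has $H^2(\Omega)\hookrightarrow C^{0,\alpha}(\overline\Omega)$ (and $W^{1,p}(\Omega)$ for finite $p$); a $C^{1}$ conclusion requires $W^{2,p}$ control with $p>n$. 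Since every one of your three lower bounds leans on $\|\nabla R(\cdot,T)\|_{L^\infty(\Omega)}$ (you use $|R(x,T)|\leq \|\nabla R\|_{L^\infty}\,d_\Omega(x)\leq C d_\Omega(x)\lesssim \phi_1(x)$ for \eqref{NN0}--\eqref{NN1}, and the smallness of $\nabla R$ on the strip for \eqref{NN2}), the argument has a genuine hole at its central step, and the same objection applies to your treatment of $\partial_t R=-\mathsf{L}_{\mathsf a}R$.

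The gap is repairable, and the repair is essentially what the paper does, mode by mode rather than for $R$ as a whole. For \eqref{NN0}--\eqref{NN1} the paper never needs a gradient bound on the tail: it dominates each eigenfunction pointwise by the ground state via the heat-kernel estimate $|\phi_k(x)|\leq C\lambda_k^{\frac12+\frac n4}\phi_1(x)$ (Lemma~\ref{lemA4}), so the whole tail is bounded by $Ce^{-\hat\lambda_2 T}\phi_1(x)$ directly, and the gap condition \eqref{gap2} absorbs it. For \eqref{NN2} the paper uses $W^{2,\,n+\frac1n}$ elliptic regularity together with the $L^\infty$ bound from Lemma~\ref{lemA4} to get $\|\nabla\phi_k\|_{C^{0,1/(n^2+1)}(\overline\Omega)}\leq C(1+\hat\lambda_k)\hat\lambda_k^{\frac12+\frac n4}$, plus a quantitative Hopf-type argument giving $|\nabla\phi_1|\geq C_0$ on $\Omega_\varepsilon$ uniformly over $\A_0$ (Lemma~\ref{lemA2} for $-\partial_\nu\phi_1\geq C$ on $\Gamma$, propagated into the strip by the H\"older bound on $\nabla\phi_1$). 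If you prefer to keep your global-remainder formulation, you must bound $\|\mathsf{L}_{\mathsf a}R\|_{L^p(\Omega)}$ for some $p>n$ — for instance termwise via $\|\phi_k\|_{L^\infty}\leq C\|\phi_k\|_{H^2}\leq C\lambda_k$ (valid for $n\leq 3$) and Weyl summability — and then apply $W^{2,p}$ regularity for divergence-form operators with $C^1$ coefficients to conclude $R(\cdot,T)\in C^{1,\alpha}(\overline\Omega)$ with an $e^{-\hat\lambda_2 T}$ bound; note also that the uniformity over $\a\in\A_0$ of the two-sided bounds $\phi_1\asymp d_\Omega$ and $|\nabla\phi_1|\geq c_0$ on a strip is not the classical Hopf lemma but a quantitative statement, which the paper imports from Lemma~\ref{lemA2} rather than proving ad hoc.
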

\begin{proof}
In the sequel $C>0$ denotes a generic constant that depends  only on $\A_0$, $\Omega, n $ and $u_0$.  We further assume 
that $T\geq 1$.\\

The proof is based on the following decomposition of $u $ in terms of the eigenfunctions of $\L$:
\bea
u(x, t) &=& \sum_{k=1}^\infty e^{-\hat \lambda_k t} P_k u_0(x), \qquad \forall t>0, \; x\in \Omega. 
\eea
For $T\geq 1$, we have 
\bea
\nabla u(x, T) &= &\sum_{k=1}^\infty e^{-\hat \lambda_k T} \nabla P_k u_0(x), \qquad \forall  x\in \Omega,\\
-\partial_t  u(x, T)  &= &\sum_{k=1}^\infty \lambda_k e^{-\hat \lambda_k T} \nabla P_k u_0(x), \qquad \forall  x\in \Omega.
 \eea
 Therefore 
 \bean
-\partial_t u(x, T)  \hspace{-3mm}  &\geq&  \hspace{-3mm}  \hat \lambda_1 e^{- \lambda_1 T}  |P_1 u_0(x)| -  \sum_{k=2}^\infty \hat \lambda_k e^{-\hat \lambda_k T} |P_k u_0(x)|, \\
|\nabla u(x, T)|^2  \hspace{-3mm}  &\geq & \hspace{-3mm}  \frac{\lambda_1^2}{2} e^{- 2\lambda_1 T} |\nabla P_1 u_0(x)|^2-  
\left(\sum_{k=2}^\infty e^{-\hat \lambda_k T} \left|\nabla P_k u_0(x))\right|^2)\right)^{\frac{1}{2}}
\left(\sum_{k=2}^\infty e^{-\hat \lambda_k T}\right)^{\frac{1}{2}},  \label{gr1}
 \eean
 for all  $x\in \Omega$.\\
 
 Next,  we derive the first inequality of the lemma.  Using inequalities \eqref{ineqEiG},  we obtain 
 
 \bea
 -\partial_t u(x, T) &\geq  &  \left[  \hat \lambda_1 e^{- \hat \lambda_1 T} \|P_1 u_0(x)\|_{L^2(\Omega)} 
-  \sum_{k=2}^\infty \hat \lambda_k^{\frac{3}{2} +\frac{n}{4}} e^{-\hat \lambda_k T} \|u_0\|_{L^2(\Omega)} 
\right]\phi_1(x),\\
&\geq  &  \left[  \hat \lambda_1e^{- \hat \lambda_1 T} \| P_1 u_0(x)\|_{L^2(\Omega)} 
-  e^{- \hat \lambda_2 T}  \sum_{k=2}^\infty \hat \lambda_k^{\frac{3}{2} +\frac{n}{4}} e^{-(\hat \lambda_k- \hat \lambda_2) } \|u_0\|_{L^2(\Omega)} 
\right]\phi_1(x).
 \eea
 Since $a\in \A_0$, the following  gap condition~\eqref{gap}, holds
 \bea
 \hat \lambda_2 - \hat \lambda_1 \geq \frac{\delta}{\hat \lambda_1}. 
 \eea
 Notice that the gap condition between the two first eigenvalues  is always fulfilled \cite{He06}.\\
 
  We deduce from Lemma~\ref{lemA3}
 \bean \label{gap2}
 \hat \lambda_2 - \hat \lambda_1 \geq \frac{\delta}{a_+ \lambda_1^{\Omega}}. 
 \eean
 Hence
 \bea
 \partial_t u(x, T)  &\geq  & e^{- \hat \lambda_1 T} \left[\hat \lambda_1  \| P_1 u_0(x)\|_{L^2(\Omega)} 
-  e^{-  \frac{\delta}{a_+ \lambda_1^{\Omega}}T}  \sum_{k=2}^\infty \hat \lambda_k^{\frac{3}{2} +\frac{n}{4}} e^{-(\hat \lambda_k- \hat \lambda_2) } \|u_0\|_{L^2(\Omega)} 
\right]\phi_1(x).
 \eea
 Again using Lemma ~\ref{lemA3} leads to 
  \bea
 |\partial_t u(x, T)|  &\geq  & e^{- \hat \lambda_1 T} \left[\lambda_1^\Omega \| P_1 u_0(x)\|_{L^2(\Omega)} 
-  Ce^{-  \frac{\delta}{a_+ \lambda_1^{\Omega}}T}  \|u_0\|_{L^2(\Omega)} 
\right]\phi_1(x).
 \eea
 
 On the other hand,  we deduce from Lemma \ref{lemA2} 
 \bea
  \| P_1 u_0(x)\|_{L^2(\Omega)} = \left| \int_{\Omega} u_0(x) \phi_1(x) dx \right|
  \geq C  \int_{\Omega} u_0(x) d_{\Omega}(x) dx.
 \eea
Since $\int_{\Omega} u_0(x) d_{\Omega}(x) dx > 0 $, we  have $ \| P_1 u_0(x)\|_{L^2(\Omega)} =
\int_{\Omega} u_0(x) d_{\Omega}(x) dx$. Hence there exists a 
unique   $ T_{11} \in \mathbb R$  solution to  the equation 
\bea
 \| P_1 u_0(x)\|_{L^2(\Omega)} 
-  Ce^{-  \frac{\delta}{a_+ \lambda_1^{\Omega}}T_{11}}  \|u_0\|_{L^2(\Omega)}  =0.
\eea
Obviously $T_{11}$ depends only on $\A_0$, $\Omega, n $ and $u_0$. \\

Consequently 
\bea
 -\partial_tu(x, T)  &\geq  & C e^{- \hat \lambda_1 T} \phi_1(x),\qquad \forall x\in \Omega, \; t \in ]T_{11}, +\infty[.
 \eea
Similar analysis on  $u(x, T)$, leads to 
 \bea
 u(x, T)  &\geq  & C e^{- \hat \lambda_1 T} \phi_1(x),\qquad \forall x\in \Omega, \; t \in ]T_{12}, +\infty[,
 \eea
where $T_{12} \in \mathbb R$.\\

Now, we shall focus on the second inequality. To do so we need to estimate $\|\nabla \phi_k\|_{L^\infty(\Omega)}$. There 
are many works dealing with optimal increasing rate of $\| \phi_k\|_{L^\infty(\Omega)}$ and $\|\nabla \phi_k\|_{L^\infty(\Omega)}$
in terms of $\lambda_k$, when $k$ tends to infinity (see for instance \cite{So88, Xu} and references therein). Most existing results 
considered 
the  Laplacian operator or  did not  pay attention to the regularity of the elliptic coefficients. Since the optimal  increasing rate 
is out of the focus of this work,  we prefer here deriving similar estimates using  classical elliptic  regularity combined with 
results of Lemma \ref{lemA4}. \\

We deduce from elliptic regularity (Theorem 9.12 in \cite{GT15})
\bea
\| \phi_k\|_{W^{2, n+\frac{1}{n}}(\Omega)} \leq C (1+\hat \lambda_k) \|\phi_k\|_{ L^{n+\frac{1}{n} }(\Omega) }.
\eea
 By Sobolev embedding Theorem, we have
 \bea
 \| \nabla  \phi_k\|_{C^{\frac{1}{n^2+1}}(\overline \Omega)} \leq C (1+\hat \lambda_k) \|\phi_k\|_{L^{\infty}(\Omega)}.
 \eea
Lemma~ \ref{lemA4} yields 

 \bean \label{gradE}
 \| \nabla  \phi_k\|_{C^{\frac{1}{n^2+1}}(\overline \Omega)} \leq C (1+ \hat \lambda_k) \hat \lambda_k^{\frac{1}{2}+\frac{n}{4}}.
 \eean
 
Combining \eqref{gradE} with \eqref{gr1}, we get 

 \bea
 |\nabla u(x, T)|^2  \geq \\
   \frac{\hat \lambda_1^2}{2} e^{- 2 \hat \lambda_1 T}  \| P_1 u_0(x)\|_{L^2(\Omega)}   |\nabla \phi_1(x)|^2-  
\left(\sum_{k=2}^\infty (1+\hat \lambda_k)^2 \hat\lambda_k^{1+\frac{n}{2}}e^{-2\hat \lambda_k T}\|u_0\|_{L^2(\Omega)}^2\right)^{\frac{1}{2}} 
\left(\sum_{k=2}^\infty e^{-\hat \lambda_k T}\right)^{\frac{1}{2}}\hspace{-1mm}, 
 \eea
 for all $ x\in \Omega$.\\

Consequently 
 \bea
 |\nabla u(x, T)|^2 \geq \\
\frac{ \hat \lambda_1^2}{2} e^{- 2 \hat \lambda_1 T}  \| P_1 u_0(x)\|_{L^2(\Omega)}   |\nabla \phi_1(x)|^2\hspace{-1mm}-  \hspace{-1mm}
e^{- 2 \hat \lambda_2 T} \left(\sum_{k=2}^\infty (1+\hat \lambda_k)^2 \hat \lambda_k^{1+\frac{n}{2}}e^{-2(\hat \lambda_k- \hat \lambda_2)}
\|u_0\|_{L^2(\Omega)}^2\right)
^{\frac{1}{2}}\hspace{-2mm}
\left(\sum_{k=2}^\infty e^{-(\hat \lambda_k - \hat \lambda_2)}\right)^{\frac{1}{2}}\hspace{-1mm}, 
 \eea
 for all $ x\in \Omega$, and $T\geq 1$.\\
 
 We deduce from Lemma \ref{lemA3} and inequality \eqref{gap2}, the following estimate 
 \bean \label{in2T} \hspace{6mm}
  |\nabla u(x, T)|^2 \; \geq \; e^{- 2 \hat \lambda_1 T}  \left[ \frac{ (\lambda_1^\Omega)^2}{2}  \| P_1 u_0(x)\|_{L^2(\Omega)}   |\nabla \phi_1(x)|^2-  
Ce^{-  \frac{\delta}{2a_+ \lambda_1^{\Omega}}T}  \|u_0\|_{L^2(\Omega)}\right] , \quad \forall  x\in \Omega, \, T\geq 1.
 \eean

\begin{proposition} \label{1eigen}
There exist  constants $\varepsilon_0>0$ and $C_0>0$ that  depend only  on $\Omega, n$, and $\A_0$, such that 
the following inequality 
\bean \label{invar}
|\nabla \phi_1(x)| &\geq& C_0, \qquad \forall x\in \Omega_\varepsilon,
\eean
holds for all $0<\varepsilon< \varepsilon_0$.
 \end{proposition}
 \begin{proof}
 For $ \varepsilon>0 $ small enough, $\Omega_\varepsilon$ becomes a tubular domain, and can be parametrized by
 \bea
 \Omega_\varepsilon= \{x+ s\nu(x):  \; \; x\in \Gamma, \;  0<s <  \varepsilon\}.
 \eea
 We deduce from \eqref{gradE}, the following estimate
 \bea
 \left|-\nabla \phi_1(x+ s\nu(x))\cdot \nu(x) +\partial_{\nu} \phi_1(x)\right| \leq C 
 (1+ \hat \lambda_1) \hat \lambda_1^{\frac{1}{2}+\frac{n}{4}} \varepsilon^{\frac{1}{n^2+1}}, \qquad \forall s\in ]0,  \varepsilon[, \; x \in 
 \Gamma.
 \eea
Hence 
 \bea
 | \nabla \phi_1(x+ s\nu(x))| \geq  -\nabla \phi_1(x+ s\nu(x)) \cdot \nu(x)| \geq  -\partial_{\nu} \phi_1(x) -  C (1+ \hat \lambda_1) \hat \lambda_1^{\frac{1}{2}+\frac{n}{4}} 
  \varepsilon^{\frac{1}{n^2+1}}, \quad \forall s\in ]0,  \varepsilon[, \; x \in 
 \Gamma.
 \eea
 Recall that  the constants $\varepsilon_0>0$ and $C_0$ in inequality \eqref{invar}   depend only 
on  $\Omega, n$, and $\A_0$.
Since  inequality \eqref{invar}  is valid for all $\varepsilon \in ]0, \varepsilon_0[$, we 
deduce from  Lemmata \ref{lemA2}, and  \ref{lemA3},  the desired result.
\end{proof}
Let $T_{13} \in \mathbb R$ be the unique solution to  the following equation 
\bea
 \frac{1}{2}\| P_1 u_0(x)\|_{L^2(\Omega)} C_0= 
Ce^{-  \frac{\delta}{2a_+ \lambda_1^{\Omega}}T_{13}}  \|u_0\|_{L^2(\Omega)},
\eea
where $C_0>0$ is the constant of inequality \eqref{invar}. \\

Applying the results of Proposition~\eqref{1eigen} to the inequality  \eqref{in2T},  we obtain
\bean \label{in3T}
  |\nabla u(x, T)|^2 & \geq & e^{- 2 \hat \lambda_1 T}  \frac{ (\lambda_1^\Omega)^2}{4}  \| P_1 u_0\|_{L^2(\Omega)} |\nabla \phi_1(x)|^2,
   \eean 
for all $T\geq T_{12}.$\\

By taking $T_1 = \max(1, T_{11}, T_{12}, T_{1,3})$, we achieve the proof of the theorem.
\end{proof}
Now, we are ready to prove the main Theorem~\ref{main}. Without loss of generality, we assume 
that $\hat \lambda_2= \min(\hat \lambda_2, 
\widehat {\tilde \lambda_2}).$ Further  $C>0$ denotes a generic constant that depending 
on $\A_0$, $\Omega, n $ and $u_0$.\\

Since $u$ satisfies \eqref{mainequation}, $\a$ verifies the following nonlinear transport equation
\bean \label{transp1}
\div(\frac{\a}{\hat \lambda_1}\nabla u(x, T)) = - u(x, T) + \frac{1}{ \hat \lambda_1} \F(\a; x, T),  \quad  x\in \Omega.
\eean
Similarly, $\ta$  satisfies the following nonlinear transport equation
\bean \label{transp2}
\div(\frac{\ta}{\widehat{\tilde \lambda_1}} \nabla \tilde u(x, T)) = - \tilde u(x, T) +\frac{1}{\widehat{\tilde  \lambda}_1}\F(\ta; x, T), \quad  x\in \Omega.
\eean

 Taking the difference between the two equations \eqref{transp1}  and \eqref{transp2}, we get 
 \bean \label{aa}
 \div((\frac{\a}{\hat \lambda_1}-\frac{\ta}{\widehat{\tilde \lambda}}) \nabla  u(x, T)) =\\  \div( 
 \frac{\ta}{\widehat{\tilde \lambda}_1}\nabla ( u(x, T) -\tilde u(x, T)))  +
   \tilde u(x, T) - u(x, T) +  \frac{1}{ \hat  \lambda_1} \F(\a; x, T)- \frac{1}{\widehat{\tilde \lambda}_1}\F(\ta; x, T), 
   \quad  x\in \Omega. \nonumber
 \eean
 \begin{proposition} \label{mainProp}
 There exist  constants $T_2>0$, $C>0$ and $\varepsilon_0>0$ that  depend 
 only on $\Omega, n,\,$   $\A_0$ and $u_0$ such that  the following 
 inequalities 
 
 \bean
 \label{XX1}
 |\frac{1}{\hat \lambda_1}-\frac{1}{\widehat{\tilde \lambda_1}}| &\leq& C \left[ e^{\hat \lambda_1 T }
  \|u-\tilde u \|_{L^2(\Omega)}+e^{-(\hat \lambda_2 - \hat \lambda_1)T}\|\a-\ta\|_{L^2(\Omega)}\right],
 \eean
and 
 \bean \label{XX2}
 \int_{\Omega_\varepsilon}| \frac{\a}{\hat \lambda_1}-\frac{\ta}{\widehat{\tilde \lambda_1}}|^2 | \nabla \phi_1 |^2  dx+
 \int_{\Omega}| \frac{\a}{\hat \lambda_1}-\frac{\ta}{\widehat{\tilde \lambda_1}}|^2 \phi_1^2dx\\ 
 \leq C\left[e^{\hat \lambda_1 T }\|u-\tilde u \|_{H^2(\Omega)}
 +e^{-(\hat \lambda_2 - \hat \lambda_1)T}\left(\|\a-\ta\|_{L^2(\Omega)}+   
 |\frac{1}{\hat \lambda_1}-\frac{1}{\widehat{\tilde \lambda_1}}| \right)
 \right]\left\|  \frac{\a}{\hat \lambda_1}-\frac{\ta}{\widehat{\tilde \lambda_1}}\right\|_{L^2(\Omega)},\nonumber
 \eean
 hold for all $T\geq T_2$, and $\varepsilon \in ]0, \varepsilon_0[$.
 
 \end{proposition}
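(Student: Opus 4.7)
The proof will exploit the difference equation displayed just above the proposition statement,
\[
\div(Q\nabla u) = \div\bigl(\tilde Q\nabla(u-\tilde u)\bigr) + (\tilde u - u) + \mathcal R,
\]
with $Q = \frac{\a}{\hat\lambda_1} - \frac{\ta}{\widehat{\tilde\lambda_1}}$, $\tilde Q = \frac{\ta}{\widehat{\tilde\lambda_1}}$, and $\mathcal R = \frac{1}{\hat\lambda_1}\F(\a;x,T) - \frac{1}{\widehat{\tilde\lambda_1}}\F(\ta;x,T)$. A preliminary $L^2$ bound on $\mathcal R$ is obtained via the splitting $\mathcal R = \frac{1}{\hat\lambda_1}(\F(\a)-\F(\ta)) + \bigl(\frac{1}{\hat\lambda_1}-\frac{1}{\widehat{\tilde\lambda_1}}\bigr)\F(\ta)$ combined with Theorem~\ref{estimF} and the spectral decay $\|\F(\ta)\|_{L^2(\Omega)} \leq Ce^{-\widehat{\tilde\lambda_2} T}$, yielding
\[
\|\mathcal R\|_{L^2(\Omega)} \leq Ce^{-\hat\lambda_2 T}\|\a-\ta\|_{L^2(\Omega)} + Ce^{-\widehat{\tilde\lambda_2} T}\Bigl|\tfrac{1}{\hat\lambda_1}-\tfrac{1}{\widehat{\tilde\lambda_1}}\Bigr|.
\]

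\textbf{For \eqref{XX1},} I will test each individual (unnormalized) transport equation against a common auxiliary function $v \in H^2(\Omega)\cap H_0^1(\Omega)$ solving $-\div(\a\nabla v) = 1$ in $\Omega$ with $v|_\Gamma = 0$; by classical elliptic theory $v$ exists, is positive in $\Omega$, and $\|v\|_{H^2(\Omega)}$ is bounded uniformly in $\a \in \A_0$. The defining equation for $v$ converts $-\int_\Omega \a\nabla u\cdot\nabla v\,dx$ into the clean volume integral $\int_\Omega u\,dx$, eliminating boundary traces (and thus avoiding any dependence on $H^2$ norms of $u-\tilde u$). Subtracting the two tested weak formulations produces the identity
\[
(\hat\lambda_1 - \widehat{\tilde\lambda_1})\int_\Omega \tilde u\,v\,dx = \int_\Omega (u-\tilde u)(1-\hat\lambda_1 v)\,dx + \int_\Omega (\a-\ta)\nabla\tilde u\cdot\nabla v\,dx + \int_\Omega (\F(\a)-\F(\ta))v\,dx.
\]
The lower bound $\int_\Omega \tilde u\,v\,dx \geq Ce^{-\widehat{\tilde\lambda_1} T}$ follows from positivity of $v$ and the pointwise bound $\tilde u \geq Ce^{-\widehat{\tilde\lambda_1} T}\tilde\phi_1$ supplied by inequality~\eqref{NN0}. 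Dividing through and estimating each term on the right using $\|\nabla\tilde u\|_{L^4(\Omega)} \leq Ce^{-\widehat{\tilde\lambda_1} T}$, Theorem~\ref{estimF}, and the preliminary bound on $\mathcal R$, then converting $|\hat\lambda_1 - \widehat{\tilde\lambda_1}|$ to $|\tfrac{1}{\hat\lambda_1}-\tfrac{1}{\widehat{\tilde\lambda_1}}|$, delivers \eqref{XX1} after absorbing the self-coupling term proportional to $e^{-(\widehat{\tilde\lambda_2} - \widehat{\tilde\lambda_1})T}|\tfrac{1}{\hat\lambda_1} - \tfrac{1}{\widehat{\tilde\lambda_1}}|$ into the left-hand side for $T$ sufficiently large.

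\textbf{For \eqref{XX2},} I will multiply the difference equation by the test function $Q\phi_1$, which vanishes on $\Gamma$ through the factor $\phi_1$. Integration by parts transforms the left-hand side into the principal term $-\int_\Omega Q^2\nabla u\cdot\nabla\phi_1\,dx$ plus a cross term $-\int_\Omega Q\phi_1\nabla Q\cdot\nabla u\,dx$ controlled by the uniform $C^1$ bound on $\A_0$. The pointwise lower bound $|\nabla u(x,T)|^2 \geq Ce^{-2\hat\lambda_1 T}|\nabla\phi_1(x)|^2$ on $\Omega_\varepsilon$ (inequality~\eqref{NN2}) generates the near-boundary contribution $\int_{\Omega_\varepsilon}|Q|^2|\nabla\phi_1|^2\,dx$; a parallel manipulation using the interior bound $u \geq Ce^{-\hat\lambda_1 T}\phi_1$ (inequality~\eqref{NN0}) extracts the interior contribution $\int_\Omega|Q|^2\phi_1^2\,dx$. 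On the right-hand side, shifting both derivatives from $\div(\tilde Q\nabla(u-\tilde u))$ onto the test function $Q\phi_1$ produces $Ce^{-\hat\lambda_1 T}\|u-\tilde u\|_{H^2(\Omega)}\|Q\|_{L^2(\Omega)}$ (the $H^2$ norm is forced by this double integration by parts), and Cauchy--Schwarz applied with the preliminary bound on $\mathcal R$ handles the terms $\|\mathcal R\|_{L^2}\|Q\|_{L^2}$ and $\|\a-\ta\|_{L^2}\|Q\|_{L^2}$.

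\textbf{The main obstacle} lies in the bookkeeping for \eqref{XX2}: arranging the test manipulations so that both weighted $L^2$ contributions of $Q$ (near the boundary and in the interior) appear together on the left-hand side with the correct prefactor $Ce^{-\hat\lambda_1 T}$, while absorbing all $\nabla Q$ cross terms via the $C^1$ control on $\A_0$. The asymmetry between \eqref{XX1} and \eqref{XX2} in the required regularity of $u-\tilde u$ ($L^2$ versus $H^2$) is structural: in \eqref{XX1} the higher-regularity test $v\in H^2\cap H_0^1$ permits double integration by parts to land on a zeroth-order integral, whereas in \eqref{XX2} the test function $Q\phi_1$ is only in $H_0^1$, so two derivatives must be absorbed by $u-\tilde u$ itself.
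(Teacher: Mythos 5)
Your preliminary bound on $\mathcal R$ and your structural remark about where the $H^2$ norm is forced are fine, but both halves of the argument have genuine gaps. For \eqref{XX1}: testing the two transport equations separately against a fixed auxiliary function $v$ (with $-\div(\a\nabla v)=1$, $v|_\Gamma=0$) necessarily produces the coefficient-mismatch term $\int_\Omega(\a-\ta)\nabla\tilde u\cdot\nabla v\,dx$ (your displayed identity is correct up to an overall sign). Since $\tilde u(\cdot,T)=e^{-\widehat{\tilde\lambda}_1T}\widetilde P_1u_0+O(e^{-\widehat{\tilde\lambda}_2T})$ with $\widetilde P_1u_0\neq0$, this term is genuinely of order $e^{-\widehat{\tilde\lambda}_1T}\|\a-\ta\|_{L^2(\Omega)}$, the same order as your normalizing factor $\int_\Omega\tilde u\,v\,dx$; after dividing you only obtain $|\hat\lambda_1-\widehat{\tilde\lambda}_1|\le C\left[e^{\hat\lambda_1T}\|u-\tilde u\|_{L^2(\Omega)}+\|\a-\ta\|_{L^2(\Omega)}\right]$, with no factor $e^{-(\hat\lambda_2-\hat\lambda_1)T}$ in front of $\|\a-\ta\|$. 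That factor is part of the statement and is essential later (it is what allows $\|\a-\ta\|$ to be absorbed in \eqref{mainR2}), and nothing in your scheme generates it: Cauchy--Schwarz is sharp here and there is no cancellation to exploit. The paper's device avoids this precisely: integrating the difference equation \eqref{aa} against the constant $1$, the unknown difference sits inside $\div\left((\frac{\a}{\hat \lambda_1}-\frac{\ta}{\widehat{\tilde\lambda}_1})\nabla u\right)$, whose integral sees only boundary values, where $\a=\ta=\a_0$; hence the bulk difference enters only through $\F(\a;\cdot,T)-\F(\ta;\cdot,T)$, which carries the $e^{-\hat\lambda_2T}$ decay of Theorem~\ref{estimF}, while the boundary flux $\int_\Gamma\a_0\partial_\nu u\,ds$ is bounded below by $ce^{-\hat\lambda_1T}$. (Your motivation for introducing $v$ --- avoiding the trace of $\partial_\nu(u-\tilde u)$ --- is reasonable, but the price is fatal.)

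For \eqref{XX2} the test function $Q\phi_1$ does not close either. After one integration by parts the left side is $-\int_\Omega Q^2\nabla u\cdot\nabla\phi_1\,dx-\int_\Omega Q\phi_1\nabla Q\cdot\nabla u\,dx$. The cross term is not rendered harmless by the uniform $C^1$ bound on $\A_0$: $\nabla Q$ is bounded but in no way small, so the best generic bound is $Ce^{-\hat\lambda_1T}\|Q\|_{L^2(\Omega)}$, which, compared with the good terms of size $e^{-2\hat\lambda_1T}$ times the weighted squares of $Q$, is too large by a factor $e^{\hat\lambda_1T}$ and carries neither a data difference nor a decaying exponential, so it cannot be absorbed (rewriting $Q\nabla Q=\frac12\nabla Q^2$ and integrating by parts again only yields same-order quadratic terms with non-small constants). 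Moreover, no interior term $\int_\Omega Q^2\phi_1^2\,dx$ is produced at all: with $Q\phi_1$ as test function the only zeroth-order contribution is the data term $\int_\Omega(\tilde u-u)Q\phi_1\,dx$, so the bound $u\ge Ce^{-\hat\lambda_1T}\phi_1$ has nothing to act on; and the sign of your ``principal term'' requires a lower bound on $-\nabla u\cdot\nabla\phi_1$, which is not literally the gradient estimate of the paper (though provable by the same expansion). The paper's test function is $\zeta u(\cdot,T)$ with $\zeta=\frac1\a\left(\frac{\a}{\hat\lambda_1}-\frac{\ta}{\widehat{\tilde\lambda}_1}\right)$: the crucial point is the identity $u\nabla u=\frac12\nabla u^2$ combined with $\div(\a\nabla u)=\partial_tu$, which converts the $\nabla\zeta$ cross term \emph{exactly} into $\frac12\int\zeta^2u\,\partial_tu+\frac12\int\zeta^2\a|\nabla u|^2$, leaving on the left only the signed quadratics $-\int\zeta^2u\,\partial_tu$ and $\int\zeta^2\a|\nabla u|^2$, to which \eqref{NN0}, \eqref{NN1} and the gradient lower bound on $\Omega_\varepsilon$ apply directly and produce both integrals in \eqref{XX2}. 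Unless you replace $\phi_1$ by $u(\cdot,T)$ itself (thereby reproducing this exact cancellation), your scheme for \eqref{XX2} does not yield the stated inequality.
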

 \begin{proof}
 Multiplying  equation \eqref{aa} by $1$, and integrating over $\Omega$, we obtain
 \bean \label{MM1}
|\frac{1}{\hat \lambda_1}-\frac{1}{\widehat{\tilde \lambda_1}}|\left| \int_\Gamma \a_0 \partial_\nu u(., T) ds(x)\right|\\\leq  
C \left[\|u-\tilde u \|_{L^2(\Omega)} + |\frac{1}{\hat \lambda_1}-\frac{1}{\widehat{\tilde \lambda_1}}| \|\F(\a; ., T) \|_{L^2(\Omega)}
+ \frac{1}{\widehat{\tilde \lambda_1}} \|\F(\a; ., T) - \F(\ta; ., T)\|_{L^2(\Omega)}  \right]. \nonumber
 \eean
 Recall 
 \bea
 \|\F(\a; ., T) \|_{L^2(\Omega)}^2 &=& \sum_{k=2}^\infty  (\hat \lambda_k - \hat \lambda_1)^2e^{-2\hat \lambda_k T} \|P_k u_0
 \|_{L^2(\Omega)}^2,\\
 &\leq & \sum_{k=2}^\infty  (\hat \lambda_k - \hat \lambda_1)^2e^{-2\hat \lambda_k T} \|u_0\|_{L^2(\Omega)}^2 \\
  &\leq & 
   \sum_{k=2}^\infty  (\hat \lambda_k - \hat \lambda_1)^2e^{-2(\hat \lambda_k-  \hat \lambda_2) } \|u_0\|_{L^2(\Omega)}^2e^{-2 \hat \lambda_2 T},
 \eea
 for all $T\geq 1$.\\
 
Consequently
 \bean \label{FF1}
 \|\F(\a; ., T) \|_{L^2(\Omega)} &\leq & C e^{-\hat \lambda_2 T}, \qquad \forall T\geq 1.
 \eean
 Applying inequalities \eqref{ineqEi},  \eqref{estimFeq} and \eqref{FF1} to the relation \eqref{MM1}, yields
 \bea
 |\frac{1}{\hat \lambda_1}-\frac{1}{\widehat{\tilde \lambda_1}}|\left| \int_\Gamma  \a_0 \partial_\nu u(., T) ds(x)\right|\leq  
C \left[\|u-\tilde u \|_{L^2(\Omega)} + e^{-\hat \lambda_2 T}\left(|\frac{1}{\hat \lambda_1}-\frac{1}{\widehat{\tilde \lambda_1}}| + 
\|\a-\ta\|_{L^2(\Omega)} \right)  \right].
 \eea
 We deduce from inequalities \eqref{inn}, \eqref{NN2}, and the fact that $\a_0\geq 1$, the following estimate
 \bea
 e^{-\hat \lambda_1 T} |\frac{1}{\hat \lambda_1}-\frac{1}{\widehat{\tilde \lambda_1}}|\leq  
C \left[\|u-\tilde u \|_{L^2(\Omega)} + e^{-\hat \lambda_2 T}\left(|\frac{1}{\hat \lambda_1}-\frac{1}{\widehat{\tilde \lambda_1}}| + 
\|\a-\ta\|_{L^2(\Omega)} \right)  \right].
 \eea 
 Inequality \eqref{gap2} gives
\bea
(1- Ce^{-\theta T}) |\frac{1}{\hat \lambda_1}-\frac{1}{\widehat{\tilde \lambda_1}}|\leq  
C \left[ e^{\hat \lambda_1 T} \|u-\tilde u \|_{L^2(\Omega)} + e^{-(\hat \lambda_2-\hat \lambda_1) T}
\|\a-\ta\|_{L^2(\Omega)}  \right],
 \eea 
with $\theta = \frac{\delta}{a_+ \lambda_1^{\Omega}}.$ Finally,  taking  $T \geq \max(T_{21}, 1), $ where
$T_{21}\in \mathbb R$ verifies $e^{-\theta T_{21}} = \frac{1}{2C}$, provides  the first inequality \eqref{XX1}. \\

Now, we shall focus on the second inequality.  Let 
\bea
\zeta =  \frac{1}{\a}\left(\frac{\a}{\hat \lambda_1}-\frac{\ta}{\widehat{\tilde \lambda_1}}\right).
\eea

 Multiplying again  the equation \eqref{aa} by $\zeta(x) u(x, T) $,  and integrating over $\Omega$, we obtain
\bea
- \frac{1}{2} \int_{\Omega} \a u(\cdot, T) \nabla  u(\cdot, T) \cdot \nabla \zeta^2 dx  -
\int_{\Omega} \zeta^2 \a  |\nabla  u(\cdot, T)|^2 dx+
\int_\Gamma  \a \zeta^2 u(\cdot, T)  \partial_\nu  u(\cdot, T) ds(x)\\
= \int_{\Omega}  
 \div( \frac{\ta}{\widehat{\tilde \lambda}_1}\nabla ( u(\cdot, T) -\tilde u(\cdot, T)))   \zeta u(\cdot, T) dx  +
   \int_\Omega(\tilde u(\cdot, T) - u(\cdot, T)) \zeta u(\cdot, T) dx \\+ \int_{\Omega} \left( \frac{1}{ \hat  \lambda_1} \F(\a; \cdot, T)- \frac{1}{\widehat{\tilde \lambda}_1}\F(\ta; \cdot, T)\right) \zeta u(\cdot, T) dx.
  \nonumber
 \eea

Integrating by parts the first term on the left hand side, leads to 
\bea
- \frac{1}{2} \int_{\Omega} \zeta^2  u(\cdot, T) \partial_t u(\cdot, T) dx  +
\frac{1}{2} \int_{\Omega} \zeta^2 \a  |\nabla  u(\cdot, T)|^2 dx-
\int_\Gamma  \a \zeta^2 u(\cdot, T)  \partial_\nu  u(\cdot, T) ds(x)\\\leq 
C\left[  \|u-\tilde u \|_{H^2(\Omega)}+  |\frac{1}{\hat \lambda_1}-\frac{1}{\widehat{\tilde \lambda_1}}| \|\F(\a; \cdot, T) \|_{L^2(\Omega)}
+ \frac{1}{\widehat{\tilde \lambda_1}} \|\F(\a; \cdot, T) - \F(\ta; \cdot , T)\|_{L^2(\Omega)}\right]\|u(\cdot, T) \|_{L^2(\Omega)}\|\zeta \|_{L^2(\Omega)}.
  \nonumber
 \eea
Since the third term on the right side is positive for $T\geq T_1$, we deduce from inequalities \eqref{FF1},  \eqref{ineqEi},  \eqref{estimFeq},   
the following estimate
\bean \label{WW2}
 -\frac{1}{2} \int_{\Omega} \zeta^2 u(\cdot, T) \partial_t u(\cdot, T) dx+\frac{1}{2} \int_{\Omega} \zeta^2 \a  |\nabla  u(\cdot, T)|^2 dx \\
C \left[\|u-\tilde u \|_{H^2(\Omega)} + e^{-\hat \lambda_2 T}\left(|\frac{1}{\hat \lambda_1}-\frac{1}{\widehat{\tilde \lambda_1}}| + 
\|\a-\ta\|_{L^2(\Omega)} \right)  \right] \|u(\cdot, T) \|_{L^2(\Omega)} \|\zeta \|_{L^2(\Omega)}, \quad \forall T\geq T_1. \nonumber
\eean
 
 On the other hand, we have
 \bea
 \|u \|_{L^2(\Omega)}^2 = \sum_{k=1}^\infty \hat \lambda_k^2e^{-2\hat \lambda_k T} \|P_k u_0\|_{L^2(\Omega)}^2
 \leq  e^{-2\hat \lambda_1 T} \sum_{k=1}^\infty \hat \lambda_k^2e^{-2(\hat \lambda_k-1)} \|u_0\|_{L^2(\Omega)}^2.
 \eea
 We deduce from inequalities \eqref{ineqEi}
 \bean \label{ZZ1}
 \|u \|_{L^2(\Omega)} &\leq & Ce^{-\hat \lambda_1 T}, \qquad \forall T\geq 1.
 \eean

Since $\frac{1}{\a} \geq \frac{1}{\a_+}$, we have 
\bean \label{ZZ2}
\| \zeta\|_{L^2(\Omega)}\leq  C\left\|  \frac{\a}{\hat \lambda_1}-\frac{\ta}{\widehat{\tilde \lambda_1}}\right\|_{L^2(\Omega)}.
\eean

Combining inequalities \eqref{NN0}, \eqref{NN1}, \eqref{NN2}, 
 \eqref{WW2},  \eqref{ZZ1},  and  \eqref{ZZ2},  we finally  obtain the second inequality \eqref{XX2} for $T\geq T_2= \max(T_1, T_{21})$.\\

\end{proof}

\begin{proposition} \label{proPP} Let $\a\in \A $, and 
  $\phi_1$ be the first eigenfunction of $\L$. Then for $\varepsilon \in ]0, \varepsilon_0[, $ there exists a constant 
  $C>0$ depending only  on $\varepsilon, \A, n, $ and  $\Omega$ such that 
 \bean \label{EigII}
 \phi_1^2(x) + \mathbbm{1}_{\Omega_{\varepsilon}}(x)|\nabla \phi_1(x)|^2 \geq  C,   \qquad  \forall x\in \Omega,
   \eean
   
 \end{proposition}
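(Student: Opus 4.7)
The plan is a two-region argument: I would partition $\Omega$ as $\Omega_\varepsilon \cup K_\varepsilon$ with $K_\varepsilon := \Omega \setminus \overline{\Omega_\varepsilon}$, and exhibit a uniform positive lower bound on one of the two terms of \eqref{EigII} on each region. The point is that the gradient term saves us near the boundary (where $\phi_1$ itself degenerates), while $\phi_1$ stays bounded away from zero in the compact interior $K_\varepsilon$.

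On $\Omega_\varepsilon$, I would invoke Proposition \ref{1eigen} directly: the bound \eqref{invar} yields $|\nabla \phi_1(x)|^2 \geq C_0^2$ with $C_0$ depending only on $(\Omega, n, \A)$. Since $\mathbbm{1}_{\Omega_\varepsilon}(x) = 1$ there, the second term of \eqref{EigII} alone is bounded below by $C_0^2$. On the complement $K_\varepsilon$, every point satisfies $d_\Omega(x) \geq \varepsilon$. I would then appeal to Lemma \ref{lemA2}, which compares $\phi_1$ with the distance to the boundary and delivers a Hopf-type bound $\phi_1(x) \geq c_1 d_\Omega(x)$ uniform in $\a \in \A$. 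Combining these on $K_\varepsilon$ gives $\phi_1^2(x) \geq c_1^2 \varepsilon^2$. Taking $C := \min(C_0^2, c_1^2 \varepsilon^2)$, which depends only on $(\varepsilon, \A, n, \Omega)$, concludes the argument.

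The only delicate point is the uniformity of the constants over the admissible class $\A$. A naive compactness / Harnack argument applied to a single fixed coefficient $\a$ would produce a positive lower bound for $\phi_1$ on $K_\varepsilon$, but with a constant that a priori depends on $\a$. The uniform distance-comparison in Lemma \ref{lemA2} (for the interior piece) and the uniform lower bound on $|\nabla \phi_1|$ on the boundary tube in Proposition \ref{1eigen} are precisely what allow the $\a$-dependence to be absorbed into constants of the class, giving a single $C = C(\varepsilon, \A, n, \Omega)$ valid for every $\a \in \A$, as claimed.
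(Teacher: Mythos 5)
Your argument is correct and is essentially the paper's own proof: the paper likewise combines the tube estimate $|\nabla\phi_1|\geq C_0$ on $\Omega_\varepsilon$ from Proposition \ref{1eigen} with the Hopf-type bound $\phi_1(x)\geq C\,d_\Omega(x)$ of Lemma \ref{lemA2}, which on the complement of the tube (where $d_\Omega\geq\varepsilon$) gives the interior lower bound. Your write-up merely makes the two-region splitting and the uniformity over $\A$ explicit, which is consistent with the paper's intent.
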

\begin{proof}
Since $\varepsilon \in ]0, \varepsilon_0[$,  inequality \eqref{invar} in Proposition \ref{1eigen} implies 
\bean \label{invar2}
|\nabla \phi_1(x)| &\geq& C_0, \qquad \forall x\in \Omega_\varepsilon.
\eean
Combining  \eqref{invar2} with inequality \eqref{inn} provides the wanted estimate.
\end{proof}
Back to the proof of  Theorem \ref{main}. Fixing $\varepsilon = \frac{\varepsilon_0}{2}$,
we deduce from Propositions \ref{mainProp} and \ref{proPP} the following estimate
\bean \label{XXX1}
\left\|  \frac{\a}{\hat \lambda_1}-\frac{\ta}{\widehat{\tilde \lambda_1}}\right\|_{L^2(\Omega)}\\ 
 \leq C\left[e^{\hat \lambda_1 T }\|u-\tilde u \|_{H^2(\Omega)}
 +e^{-(\hat \lambda_2 - \hat \lambda_1)T}\left(\|\a-\ta\|_{L^2(\Omega)}+   
 |\frac{1}{\hat \lambda_1}-\frac{1}{\widehat{\tilde \lambda_1}}| \right)
 \right],\nonumber
\eean
for all $T\geq T_2$. \\

We further assume that $T\geq T_2$. By a simple calculation, we get 
\bea
\left( \frac{1}{\widehat{\tilde  \lambda_1}} \int_{\Omega}| \a-\ta|^2 dx\right)^{\frac{1}{2} } 
 &\leq&\left\|  \frac{\a}{\hat \lambda_1}-\frac{\ta}{\widehat{\tilde \lambda_1}}\right\|_{L^2(\Omega)}
 +\ \left|\frac{1}{\hat \lambda_1}-\frac{1}{\widehat{\tilde \lambda_1}} \right| \|\ta\|_{L^2(\Omega)}.
\eea

We deduce from inequalities  \eqref{ineqEi}, the following estimate 
\bean \label{STG}
 \|\a-\ta\|_{L^2(\Omega)}
 &\leq& C \left[ \left\|  \frac{\a}{\hat \lambda_1}-\frac{\ta}{\widehat{\tilde \lambda_1}}\right\|_{L^2(\Omega)}
 +  \left|\frac{1}{\hat \lambda_1}-\frac{1}{\widehat{\tilde \lambda_1}} \right| \right].
\eean

Combining estimates  \eqref{STG},  \eqref{XXX1}, and  \eqref{XX1}, yields

\bean \label{mainR2}
 \|\a-\ta\|_{L^2(\Omega)}
 &\leq& C\left[e^{\hat \lambda_1 T } \|u-\tilde u \|_{H^2(\Omega)} 
 +e^{-(\hat \lambda_2 - \hat \lambda_1) T}\|\a-\ta\|_{L^2(\Omega)}
 \right].
\eean

 The gap condition \eqref{gap2} gives
\bea
(1- Ce^{-\theta T})  \|\a-\ta\|_{L^2(\Omega)}  &\leq &
C e^{\hat \lambda_1 T} \|u-\tilde u \|_{L^2(\Omega)} 
 \eea 
with $\theta = \frac{\delta}{a_+ \lambda_1^{\Omega}}.$ Finally,  taking  $T \geq \max(T_{2}, T_3), $ where
$T_{3}\in \mathbb R$ verifies $e^{-\theta T_{3}} = \frac{1}{2C}$, provides  the main estimate \eqref{stability} of Theorem~\ref{main}.

 \appendix
\section{}
\label{App} 
We  recall some known properties of the eigenelements of the unbounded  operator $L_a$.
 \begin{lemma}
 The eigenvalue $\lambda_1$ is simple, and has a strictly positive  eigenfunction $\phi_1 \in C^1(\overline \Omega)$.
  \end{lemma}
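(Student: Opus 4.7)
The plan is to proceed by variational methods combined with the strong maximum principle and standard elliptic regularity for divergence-form operators. Since $\L$ is self-adjoint, strictly positive with compact resolvent, the first eigenvalue admits the Rayleigh quotient characterization
\[
\lambda_1 = \min_{v \in H^1_0(\Omega)\setminus\{0\}} \frac{\int_\Omega \mathsf a |\nabla v|^2 dx}{\int_\Omega v^2 dx},
\]
and the minimum is attained at some $\phi_1 \in H^1_0(\Omega)$ with $\|\phi_1\|_{L^2(\Omega)}=1$.

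First I would show that $\phi_1$ does not change sign. The key point is that $|\phi_1| \in H^1_0(\Omega)$ with $|\nabla |\phi_1|| = |\nabla \phi_1|$ almost everywhere, so $|\phi_1|$ is also a minimizer and thus a nonnegative weak solution of $\L |\phi_1| = \lambda_1 |\phi_1|$. Elliptic regularity from Gilbarg--Trudinger, using $\mathsf a \in C^1(\overline\Omega)$ and $\Omega$ of class $C^3$, bootstraps to $|\phi_1| \in C^{1,\alpha}(\overline\Omega)$. The strong maximum principle (Harnack inequality) for divergence-form operators then gives $|\phi_1|>0$ inside $\Omega$, so $\phi_1$ itself is of constant sign; one may choose $\phi_1>0$ in $\Omega$.

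Next I would prove simplicity. Suppose $\psi$ is another eigenfunction for $\lambda_1$, linearly independent of $\phi_1$. After rescaling, $\psi$ can be assumed real and, by the same argument, either $\psi>0$ or $\psi<0$ in $\Omega$; in either case $\phi_1$ and $\psi$ would be linearly dependent modulo a normalization through orthogonality (a positive and a negative function cannot be $L^2$-orthogonal unless one vanishes). More directly, pick $c \in \mathbb R$ such that $w := \phi_1 - c\psi$ vanishes at some interior point $x_0 \in \Omega$. Then $w$ is still an eigenfunction of $\L$ associated to $\lambda_1$, hence of constant sign, but then $w \geq 0$ and $w(x_0)=0$ would force $w\equiv 0$ by the strong maximum principle, contradicting the assumed linear independence. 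Therefore $\lambda_1$ is simple.

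The main obstacle, and really the only technical step, is invoking the correct version of the strong maximum principle and the Harnack/boundary regularity for operators in divergence form with merely $C^1$ coefficients; this is handled by Theorems 8.19 and 8.29 of Gilbarg--Trudinger, which also deliver the claimed $\phi_1 \in C^{1,\alpha}(\overline\Omega) \subset C^1(\overline\Omega)$ thanks to the $C^3$ regularity of $\Gamma$ and the Dirichlet boundary condition. The positivity up to the boundary is not asserted (and indeed fails on $\Gamma$), so only interior strict positivity is needed, which is exactly what the Harnack inequality provides.
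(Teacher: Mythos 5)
Your proof is correct and follows essentially the same route as the paper: the Rayleigh-quotient characterization of $\lambda_1$, the observation that $|\phi_1|$ is also a minimizer and hence an eigenfunction, Harnack/strong maximum principle for interior positivity, orthogonality (or the maximum-principle variant) for simplicity, and elliptic regularity for $\phi_1 \in C^1(\overline \Omega)$. No gaps worth noting.
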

  \begin{proof}  The proof can be found in many references \cite{GT15, He06}. Since it is too 
  short and for the sake of completeness we give it here.\\
  
   We can recover the second result by using the Min-max principle.  It is well known that  the smallest
    eigenvalue $\lambda_1$ is the minimizer  of  the 
  Rayleigh quotient \cite{He06}  
 \[
  \lambda_1 =  \min_{\phi \in H_0^1(\Omega) \setminus\{0\}}  \frac{\int_\Omega a |\nabla \phi|^2 dx}{ \int_{\Omega} |\phi|^2 dx}.
 \]
 Since $\phi_1  \in H_0^1(\Omega) $ we also
have $|\phi_1| \in H^1(\Omega)$ and $\nabla |\phi_1| = \textrm{sign}(\phi_1) \nabla \phi_1$, we see
 that $|\phi_1|$ and $\phi_1$ has the similar
Rayleigh quotient. Therefore,  $|\phi_1|$ is also a minimizer of the Rayleigh quotient and, therefore, an 
eigenfunction associated to $\lambda_1$.  By Harnack inequality  for elliptic operators,  $|\phi_1|$ does not 
vanish  in $\Omega$. Since two functions having contant signs 
  can not be orthogonal in  $L^2(\Omega)$,  $\lambda_1$ is simple. We also deduce from elliptic 
  regularity that $\phi_1$ is $C^1(\overline \Omega)$.
 \end{proof}
 
 The proof of the following results can be found in Proposition 2.1 and Lemma 2.1 in \cite{ACT19} or Lemma 4.6.1 in \cite{Da90}.
 \begin{lemma} \label{lemA2} Let $\a\in \A $, and 
  $\phi_1$ be the first eigenfunction of $\L$. Then there exists a constant $C>0$ that  depends only  on $\A$ and 
 $\Omega$ such that  
 \bean \label{inn}
 \phi_1(x) \geq  C d_\Omega(x) \qquad  \forall x\in \Omega; \qquad
  -\partial_\nu \phi_1(x) > C \qquad  \forall x\in \partial \Omega.
  \eean
 \end{lemma}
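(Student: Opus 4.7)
The plan is to combine a uniform $C^{1,\alpha}(\overline\Omega)$ bound on $\phi_1$ over the family $\A$, the classical Hopf boundary point lemma, and a compactness argument to make the constants independent of $\a \in \A$. Normalize $\phi_1$ by $\|\phi_1\|_{L^2(\Omega)}=1$. First I would observe that $\lambda_1$ is uniformly bounded above on $\A$: testing the Rayleigh quotient against a fixed $\varphi\in H^1_0(\Omega)\setminus\{0\}$ gives $\lambda_1 \leq \a_+ \lambda_1^\Omega$. Since $\L\phi_1 = \lambda_1 \phi_1 \in L^\infty(\Omega)$ (the $L^\infty$ bound will follow from Moser/De Giorgi or a standard bootstrap, using $\a \in C^1(\overline\Omega)$ with $\|\a\|_{C^1}\leq \a_+$), classical Schauder estimates yield a uniform bound $\|\phi_1\|_{C^{1,\alpha}(\overline\Omega)}\leq C(\A,n,\Omega)$.

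Next I would treat the second inequality. For each fixed $\a\in \A$, Hopf's boundary point lemma applied to the positive eigenfunction $\phi_1$ (which satisfies $\L\phi_1 \geq 0$ and $\phi_1|_\Gamma=0$) gives $-\partial_\nu \phi_1(x) > 0$ for every $x\in \Gamma$. To upgrade this pointwise result to a uniform lower bound over $\A\times\Gamma$, I would argue by contradiction: if the infimum vanished, one could extract sequences $\a_n \in \A$ and $x_n \in \Gamma$ with $-\partial_\nu \phi_1^{\a_n}(x_n)\to 0$. By Arzel\`a--Ascoli applied to the uniformly $C^1$-bounded family $\{\a_n\}$ and the uniformly $C^{1,\alpha}$-bounded family $\{\phi_1^{\a_n}\}$, a subsequence converges so that $\a_n \to \a_\star$ uniformly with $\a_\star \in \A$, $\phi_1^{\a_n}\to \phi_\star$ in $C^1(\overline\Omega)$, $\lambda_1^{\a_n}\to \lambda_\star$, and $x_n\to x_\star \in \Gamma$. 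Passing to the limit in the weak formulation shows $\phi_\star$ is a nonnegative $L^2$-normalized eigenfunction of $\L_{\a_\star}$ associated to $\lambda_\star$; by simplicity of $\lambda_1^{\a_\star}$ and positivity, $\phi_\star = \phi_1^{\a_\star}$, whence $-\partial_\nu \phi_\star(x_\star) > 0$ by Hopf, contradicting $-\partial_\nu \phi_1^{\a_n}(x_n)\to 0$.

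For the first inequality I would combine the boundary bound just obtained with a Taylor expansion near $\Gamma$ and a compactness argument in the interior. Let $c_0>0$ be the uniform constant from the second inequality. Choose $\varepsilon_0>0$ small enough (depending on the curvature of $\Gamma$ and on the uniform $C^{1,\alpha}$ modulus of $\phi_1$) so that every $y\in \Omega_{\varepsilon_0}$ admits a unique decomposition $y = x - s\nu(x)$ with $x\in\Gamma$, $s = d_\Omega(y)\in [0,\varepsilon_0)$. Writing
\[
\phi_1(y) \;=\; \phi_1(x) - s\,\partial_\nu \phi_1(x) + R(x,s),\qquad |R(x,s)|\leq C\,s^{1+\alpha},
\]
and using $\phi_1(x)=0$ together with $-\partial_\nu \phi_1(x)\geq c_0$, we get $\phi_1(y)\geq c_0 s - Cs^{1+\alpha}\geq \tfrac{c_0}{2} d_\Omega(y)$ provided $\varepsilon_0$ is chosen so that $C\varepsilon_0^\alpha \leq c_0/2$. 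On the complementary set $\{d_\Omega \geq \varepsilon_0\}$, the same Arzel\`a--Ascoli/contradiction argument (if $\inf_{\a\in\A,\, d_\Omega\geq\varepsilon_0} \phi_1 = 0$, pass to a limit eigenfunction which would then vanish at an interior point, contradicting Harnack) yields a uniform positive lower bound $m_0>0$, and since $d_\Omega \leq \mathrm{diam}(\Omega)$, one has $\phi_1 \geq (m_0/\mathrm{diam}(\Omega))\,d_\Omega$ there. Combining the two regions gives the announced estimate with $C=\min(c_0/2,\,m_0/\mathrm{diam}(\Omega))$.

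The main obstacle is the uniformity of the constants in $\a\in \A$. Both Hopf's lemma and the interior positivity of $\phi_1$ are pointwise statements depending a priori on $\a$; what has to be justified is that the compactness/continuity argument can be carried out so that the limit $\a_\star$ still lies in $\A$ and the eigenvalue $\lambda_1^{\a_n}$ and eigenfunction $\phi_1^{\a_n}$ pass to the limit correctly. Once the uniform $C^{1,\alpha}$ bound on $\phi_1$ is secured from the $C^1(\overline\Omega)$ regularity of $\a$ and the compactness of $\Gamma$, the remainder of the argument is essentially routine, which is also why the result is simply quoted from \cite{ACT19, Da90}.
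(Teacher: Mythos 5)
The paper does not actually prove this lemma: it is quoted from Proposition 2.1 and Lemma 2.1 of \cite{ACT19} and Lemma 4.6.1 of \cite{Da90}, where the lower bound $\phi_1\geq C d_\Omega$ is obtained by quantitative barrier/comparison arguments. Your self-contained route (uniform $W^{2,p}$/Schauder bounds giving a $C^{1,\alpha}(\overline\Omega)$ bound uniform over $\A$, Hopf's boundary point lemma applied to $\phi_1$ viewed as a supersolution of the zeroth-order-free operator $-\div(\a\nabla\cdot)$, then a compactness/contradiction argument to make the Hopf constant and the interior Harnack lower bound uniform, and finally the Taylor expansion near $\Gamma$ to convert the normal-derivative bound into $\phi_1\geq Cd_\Omega$) is therefore genuinely different, and it is sound in outline; the trade-off is that your constants are non-constructive, whereas the barrier proofs in the cited references yield constants depending explicitly on the ellipticity and Lipschitz bounds.

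One step needs repair: you assert that the Arzel\`a--Ascoli limit $\a_\star$ of a sequence $\a_n\in\A$ again lies in $\A$. That is not true as stated --- a uniform limit of functions with $\|\a_n\|_{C^1(\overline\Omega)}\leq \a_+$ need only be Lipschitz with constant $\a_+$, not $C^1$. The fix is easy but should be said: run the compactness argument in the closure of $\A$ in $C(\overline\Omega)$, i.e.\ the class of Lipschitz coefficients with $1\leq\a_\star$, $\a_\star|_\Gamma=\a_0$ and Lipschitz constant $\leq\a_+$, and check that everything you use for the limit problem (existence and simplicity of the first eigenvalue, the uniform $C^{1,\alpha}$ estimate, Harnack, and the Hopf boundary point lemma) holds for uniformly elliptic divergence-form operators with Lipschitz coefficients, with constants depending only on the ellipticity and Lipschitz bounds --- which it does. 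With that adjustment (and the minor bookkeeping that $\varepsilon_0$ must also be below the reach of the $C^3$ boundary so that $s=d_\Omega(y)$ in your tubular coordinates), your argument gives the lemma with a constant depending only on $\A$, $n$ and $\Omega$, as required.
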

  The proof of the  following lemma based on the  Min-max principle is forward. 
 \begin{lemma} \label{lemA3} Let $\a\in \A $, and  let  $\lambda_k, \,  k\in \mathbb N^*$,  be the increasing eigenvalues of  $\L$. 
   Then
  \bean \label{ineqEi}
\lambda_k^\Omega \leq \lambda_k \leq  a_+ \lambda_k^\Omega, \qquad \forall k \in \mathbb N^*,
  \eean
  where $\lambda_k^\Omega,  k\in \mathbb N^*$,  are the increasing Dirichlet  eigenvalues of  the  Laplacian $-\Delta$ in $\Omega$. 
 \end{lemma}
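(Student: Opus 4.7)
The plan is to invoke the Courant--Fischer min-max characterization of the eigenvalues of the self-adjoint operators $\L$ and $-\Delta$ (with Dirichlet boundary conditions), and then exploit the pointwise two-sided bound $1 \leq \a(x) \leq \a_+$ that comes from the definition of the admissible set $\A$.

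More precisely, I would start by recalling that for every $k \in \mathbb N^*$,
\[
\lambda_k \;=\; \min_{\substack{V_k \subset H_0^1(\Omega) \\ \dim V_k = k}}\;\max_{\phi \in V_k \setminus \{0\}} \frac{\int_\Omega \a(x)|\nabla \phi|^2\, dx}{\int_\Omega |\phi|^2\, dx},
\]
and the analogous formula holds for $\lambda_k^\Omega$ with $\a$ replaced by the constant $1$ (since $-\Delta$ with Dirichlet boundary conditions is self-adjoint with compact resolvent and the same form domain $H_0^1(\Omega)$). This is stated as usable in the paper (the same principle was already invoked in the proof of Theorem~\ref{interT1}).

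Next, the pointwise inequalities $1\leq \a(x)\leq \a_+$ valid for every $\a\in\A$ yield, for every $\phi\in H_0^1(\Omega)$,
\[
\int_\Omega |\nabla \phi|^2\, dx \;\leq\; \int_\Omega \a(x)|\nabla \phi|^2\, dx \;\leq\; \a_+ \int_\Omega |\nabla \phi|^2\, dx.
\]
Dividing by $\int_\Omega |\phi|^2\, dx$, taking the maximum over $\phi\in V_k\setminus\{0\}$, and then the minimum over $k$-dimensional subspaces $V_k\subset H_0^1(\Omega)$ preserves both inequalities, giving exactly $\lambda_k^\Omega \leq \lambda_k \leq \a_+\lambda_k^\Omega$.

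There is essentially no obstacle here: the statement is a standard monotonicity-of-eigenvalues result under pointwise comparison of coefficients, and the only thing to check is that the bilinear form associated to $\L$ and the one associated to $-\Delta$ share the same form domain $H_0^1(\Omega)$, which is immediate from $\a\in C^1(\overline\Omega)$ and $\a\geq 1$. No regularity beyond what is already assumed in the definition of $\A$ is needed, and the constant $\a_+$ on the right-hand side is the same constant appearing in the definition of $\A$, so the bound is uniform over $\A$.
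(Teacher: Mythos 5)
Your proof is correct and follows exactly the route the paper intends: the paper gives no details, stating only that the proof ``based on the Min-max principle is forward,'' and your argument (Courant--Fischer for both $\L$ and $-\Delta$ on the common form domain $H_0^1(\Omega)$, combined with the pointwise bounds $1\leq \a(x)\leq \a_+$ from the definition of $\A$) is precisely that standard comparison argument. Nothing is missing.
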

 The proof of the following lemma is based on the analysis of the rate of decay of the heat kernel, and  can be found in (Corollary  4.6.3 of \cite{Da90}).
 
 \begin{lemma} \label{lemA4}
 Let $\a\in \A $, and  let  $\lambda_k, \,  k\in \mathbb N^*$,  be the increasing eigenvalues of  $\L$, and  $\phi_k, \,  k\in \mathbb N^*,$  be corresponding orthonormal sequence of eigenfunctions. Then there exists a constant $C>0$ depending on $\A, n$ and 
 $\Omega$ such that  
 \bean\label{ineqEiG}
 |\phi_k (x)| \leq C \lambda_k^{\frac{1}{2}+\frac{n}{4}} \phi_1(x), \qquad \forall x \in \Omega, \;   k\in \mathbb N^*.
 \eean
 
 \end{lemma}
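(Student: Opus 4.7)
My plan is to combine ultracontractivity of the Dirichlet heat semigroup generated by $-\L$ with a boundary-sensitive Gaussian kernel bound, and then use Lemma~\ref{lemA2} to trade the distance-to-boundary factor for $\phi_1$. The starting point is the self-reproducing identity $\phi_k = e^{\lambda_k t}\, e^{-t\L}\phi_k$ provided by the spectral decomposition, which gives, via Cauchy--Schwarz applied to the heat-kernel representation,
\[
|\phi_k(x)| \leq e^{\lambda_k t} \|p_t(x,\cdot)\|_{L^2(\Omega)} \|\phi_k\|_{L^2(\Omega)} = e^{\lambda_k t}\, p_{2t}(x,x)^{1/2},
\]
where $p_t$ is the Dirichlet heat kernel of $\L$. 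So everything reduces to a pointwise bound on $p_t(x,x)$ that captures both the parabolic scaling and the Dirichlet vanishing at $\Gamma$.

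The ingredient I would import from Davies (Corollary~4.6.3 of \cite{Da90}) is, in the range of small $t$, an inequality of the form
\[
p_t(x,x) \leq C\, t^{-n/2}\, \min\!\Bigl(1,\, \tfrac{d_\Omega(x)^2}{t}\Bigr),
\]
the $\min$-factor encoding the Dirichlet torsion. Substituting into the previous display, taking square roots, and optimizing the free parameter by the natural choice $t = 1/\lambda_k$, the exponential prefactor becomes a bounded constant, and separating the two regimes $d_\Omega(x) \lessgtr 1/\sqrt{\lambda_k}$ one checks that in both cases
\[
|\phi_k(x)| \leq C\, \lambda_k^{1/2+n/4}\, d_\Omega(x).
\]
The claim then follows immediately from the lower bound $\phi_1(x) \geq c\, d_\Omega(x)$ supplied by Lemma~\ref{lemA2}, with a constant depending only on $\A$, $n$, and $\Omega$.

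The main obstacle is transferring the boundary-sensitive Gaussian bound from the classical Dirichlet Laplacian setting to the divergence-form operator $\L$ with variable $C^1$ coefficients, uniformly over $\a \in \A$. The two-sided ellipticity $1 \leq \a \leq \a_+$ permits a Nash--Aronson comparison that yields the ceiling $p_t(x,x) \leq C\, t^{-n/2}$, and the $C^3$ regularity of $\Gamma$ together with $\a|_\Gamma = \a_0 \in C^1(\Gamma)$ supports a barrier/comparison argument (against a constant-coefficient model near $\Gamma$) to recover the Dirichlet torsion factor $d_\Omega(x)^2/t$. Only small times $t \leq 1/\lambda_1$ enter the argument, so any such local estimate with a uniform constant suffices; this uniformity is the delicate point, but it is a consequence of the compactness of $\A$ in the $C^1$ topology.
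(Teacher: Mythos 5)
The paper offers no self-contained proof of Lemma~\ref{lemA4}: it is quoted from Davies \cite{Da90}, Corollary 4.6.3, whose argument is precisely the one you reconstruct --- the identity $\phi_k=e^{\lambda_k t}e^{-t\L}\phi_k$, Cauchy--Schwarz against the kernel giving $|\phi_k(x)|\le e^{\lambda_k t}\,p_{2t}(x,x)^{1/2}$, an on-diagonal heat-kernel bound carrying the boundary decay (equivalently, intrinsic ultracontractivity with a polynomial-in-$t$ constant), the choice $t=1/\lambda_k$, and finally the lower bound $\phi_1\ge c\,d_\Omega$ of Lemma~\ref{lemA2}. Your two-regime check producing the exponent $\tfrac12+\tfrac n4$ is correct, and since $\lambda_k\ge\lambda_1^{\Omega}$ only the fixed time window $t\le 1/\lambda_1^{\Omega}$ is ever used, so small-time estimates indeed suffice.

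The one step that does not hold as written is your justification of uniformity over $\a\in\A$. The set $\A$ is a closed bounded subset of the infinite-dimensional Banach space $C^1(\overline\Omega)$, hence it is \emph{not} compact in the $C^1$ topology (it is only precompact in weaker topologies such as $C^0(\overline\Omega)$, and its closure there leaves the $C^1$ class); moreover, even granted compactness, you would still need the constant in the boundary-decay kernel estimate to depend continuously, or at least upper semicontinuously, on the coefficient before a covering argument yields a uniform bound, and that is not established. The uniformity should instead be read off from the quantitative form of the estimates you invoke: the Nash--Aronson ceiling $p_t(x,x)\le C t^{-n/2}$ has a constant depending only on the ellipticity bounds $1\le \a\le \a_+$ and $n$, and the barrier/comparison argument producing the factor $\min\bigl(1,\,d_\Omega(x)^2/t\bigr)$ (or, in Davies' formulation, the intrinsic ultracontractivity constant) depends only on these bounds, the Lipschitz bound $\|\a\|_{C^1(\overline\Omega)}\le \a_+$, and the fixed $C^3$ domain $\Omega$ --- exactly the data defining $\A$; the constraint $\a|_\Gamma=\a_0$ plays no role here. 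With the uniformity argued this way rather than by compactness, your proof matches the one the paper cites.
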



\end{document}